\newtheorem{theorem}{Theorem}[section]
\newtheorem{lemma}{Lemma}[section]
\theoremstyle{definition}
\newtheorem{example}{Example}[section]
\newtheorem{proposition}{Proposition}[section]
\newtheorem{conjecture}{Conjecture}[section]
\newtheorem*{theorem*}{Theorem}
\theoremstyle{remark}
\newtheorem{remark}{Remark}[section]
\numberwithin{equation}{section}
\newcommand{\Mod}[1]{\ (\operatorname{mod} #1)}
\renewcommand{\Re}{\mathrm{Re}}
\renewcommand{\leq}{\leqslant}
\renewcommand{\geq}{\geqslant}
\begin{document}

\title{A central limit theorem for coefficients of $L$-functions in short intervals}


\author{}
\address{}
\curraddr{}
\email{}
\thanks{}

\author{Sun-Kai Leung}
\address{D\'epartement de math\'ematiques et de statistique\\
Universit\'e de Montr\'eal\\
CP 6128 succ. Centre-Ville\\
Montr\'eal, QC H3C 3J7\\
Canada}
\curraddr{}
\email{sun.kai.leung@umontreal.ca}
\thanks{}

\subjclass[2020]{11F30; 60F05}

\date{}

\dedicatory{}

\keywords{}

\begin{abstract}
Assuming the generalized Lindel\"{o}f hypothesis (GLH), a weak version of the generalized Ramanujan conjecture and a Rankin--Selberg type partial sum estimate, we establish the normality of the sum of coefficients of a general $L$-function in short intervals of appropriate length. The novelty lies in the degree aspect under GLH. 
In particular, this generalizes the result of Hughes and Rudnick on lattice point counts in thin annuli.
\end{abstract}

\maketitle

\section{Introduction}

The distribution of arithmetic functions 
$a: \mathbb{N} \to \mathbb{C}$ 
is a central topic in analytic number theory. In particular, a natural question arises regarding the distribution of the partial sum
$\sum_{n \leq x}a(n)$ 
when $x \in [1, X]$ is chosen uniformly at random. One might expect that the behavior of this sum resembles that of a sum of independent and identically distributed random variables, suggesting it should follow a normal distribution according to the central limit theorem. Although it may come as a surprise that this is not typically the case, many arithmetic functions actually follow a Gaussian law in short intervals 
$[x,x+H]$ for some appropriate length $H=o(X)$
rather than in long intervals
$[1,x].$ 

If the length of short intervals $H \to \infty$ but $\log H = o(\log X)$ as $X \to \infty,$ i.e., very short, Davenport and Erd\H{o}s \cite{MR0055368}, and more recently Lamzouri \cite{MR3091515}, established central limit theorems for the short character sum $\sum_{x<n\leq x+H} \chi(n),$ where $\chi \Mod{q}$ is a Dirichlet character with $X=q.$ Similarly, assuming a uniform 
prime $k$-tuple conjecture, Montgomery and Soundararajan \cite{MR2104891} established a central limit theorem for the weighted prime count $\sum_{x<n \leq x+H} \Lambda(n)$ in almost the same range, except it is also required that $H/\log X \to \infty.$

On the other hand, if the length of short intervals $H=o(X)$ but $\log (X/H)  = o(\log X)$ as $X \to \infty,$ i.e., shortish, Harper \cite{harper2022note} proved very recently that for almost all primes $q,$ the short character sum $\sum_{x<n\leq x+H} \Bigl( \frac{n}{q} \Bigr)$ also follows a normal distribution with $X=q.$ Similarly, assuming the Riemann hypothesis, the linear independence hypothesis and its quantitative formulation, the author \cite[Theorem 9.1]{joint} has very recently proved that the weighted count of primes $\sum_{x <n \leq x+H} \Lambda(n)$ follows a normal distribution in the above range, with respect to the logarithmic density. In other words, as $u \in [1,U]$ varies uniformly, the weighted count of primes $\sum_{e^u <n \leq e^{u+\delta}} \Lambda(n)$ has a Gaussian limiting distribution, provided that $\delta=o(1)$ but $\log \frac{1}{\delta} = o(\log U)$ as $U \to \infty.$

The results of both Harper and the author rely heavily on the dual sum. In the case of short (primitive) character sums, the dual sum is P\'{o}lya's Fourier expansion 
\begin{align} \label{eq:gl1}
\sum_{x<n\leq x+H} \chi(n) =
\frac{2\epsilon_{\chi}}{\pi}
\sum_{0< n \leq \frac{q}{H}} \frac{\overline{\chi}(n)}{\sqrt{n}}
\cdot \frac{\sin \bigl(\pi n \frac{H}{q} \bigr)}{\sqrt{n/q}} \cdot
\cos \left( \frac{\pi n}{q} (2x+H)+\varphi \right)
+O(H\log q),
\end{align}
where $\varphi:=-\frac{\pi}{2} \cdot 1_{\chi \text{ odd}}$ (see \cite[Section 9.4]{MR2378655} for instance).
In the case of primes in short intervals, assuming the Riemann hypothesis, the dual sum is von Mangoldt's explicit formula 
\begin{align} \label{eq:glinfty}
\sum_{e^u <n \leq e^{u+\delta}} \Lambda(n) = 
(e^{u+\delta}-e^u)
-\frac{2}{\pi}e^{u/2}\sum_{0 < \widetilde{\gamma} \leq 1/\delta}
\frac{\sin(\pi \widetilde{\gamma} \delta)}{\widetilde{\gamma}}
\cdot \cos(\pi \widetilde{\gamma} (2u+\delta))+O(\delta e^u u^2),
\end{align}
where $\widetilde{\gamma}:=\gamma/2\pi,$ i.e., the normalized positive ordinate of a non-trivial Riemann zero (see \cite[Theorem 12.5]{MR2378655} for instance).

Here, primitive Dirichlet characters $\chi(n)$ are considered as ``$GL_1$-arithmetic functions" as the Dirichlet $L$-function $L(s,\chi)$ is of degree $1$ (see Section \ref{sec:pre} for definition). Meanwhile, let $\tau'_k(n):=|\{ (d_1, \ldots, d_k) \in \mathbb{N}^k_{>1} \,: \, n=d_1\cdots d_k \}|.$ Then, Linnik's identity (see \cite[Chapter 13.3]{MR2061214} for instance) states that 
\begin{align*}
\frac{\Lambda(n)}{\log n}=-\sum_{k=1}^{\infty} \frac{(-1)^k}{k}\tau'_k(n).
\end{align*}
Moreover, for each integer $k \geq 1,$ since $\tau_k(n)$ is a ``$GL_k$-arithmetic function" by the same reasoning, the von Mangoldt function $\Lambda(n)$ is regarded as a ``$GL_{\infty}$-arithmetic function". One may then ask: what about $GL_m$-arithmetic functions in general? Do they follow a normal distribution in short intervals? In this paper, we provide an affirmative answer conditionally, again by studying the dual sum, which is the Vorono\u{\i} summation formula.
\\~\\
\noindent\textit{Notation.} Throughout the paper, we use the standard big $O$ and little $o$ notations as well as the Vinogradov notations $\ll, \gg,$ where from now on, all implied constants depend on the fixed $L$-function $L(f,s)$ and subscripts are omitted.

\section{Definitions} \label{sec:pre}
We begin with introducing a general class of $L$-functions.
Throughout the paper, let 
\begin{align*}
L(f,s):=\sum_{n=1}^{\infty} \frac{\lambda_f(n)}{n^s}
\end{align*}
be an $L$-function of degree $m \geq 2$ which is self-dual, i.e., 
the coefficients $\lambda_f(n)$ are real, and is absolutely convergent for $\Re(s)>1.$
We denote the gamma factor as 
\begin{align*}
L_{\infty}(f,s):=\prod_{j=1}^m {\pi}^{-\frac{s+\kappa_j}{2}}
\Gamma\left( \frac{s+\kappa_j}{2} \right),
\end{align*}
where $\kappa_j$'s are the local parameters at $\infty.$ For convenience, we shall call 
\begin{align*}
k:= \sum_{j=1}^m \Re(\kappa_j)
\end{align*}
the weight of $L(f,s).$
The complete $L$-function is defined as 
\begin{align*}
\Lambda(f,s):=D^{\frac{s}{2}}L_{\infty}(f,s)L(f,s)
\end{align*}
with conductor $D,$ which admits an analytic continuation to $\mathbb{C}$ except possibly poles of finite order at $s=1.$ Moreover, it satisfies the function equation
\begin{align*}
\Lambda(f,s)=w \Lambda(f,1-s)
\end{align*}
with root number $w=\pm 1.$ 

\begin{remark}
The $L$-functions considered here do not necessarily have an Euler product, as the multiplicative structure of integers is not involved in establishing the normality.
\end{remark}

Let us denote $\Delta_f(x):=\sum_{n \leq x}\lambda_f(n)-\mathop{\mathrm{Res}}_{s=1} L(f,s)x^s/s.$ Then in this paper, 
we shall study the distribution of the remainder of the short coefficient sum
\begin{align*}
\Delta_f((x+\delta)^m)-\Delta_f(x^m)=\sum_{x^m<n \leq (x+\delta)^m} 
\lambda_f(n)-   \mathop{\mathrm{Res}}_{s=1} \left( L(f,s) \cdot
\frac{(x+\delta)^{ms}-x^{ms}}{s} \right) 
\end{align*}
conditioning on the following conjectures.

\begin{conjecture}[Generalized Lindel\"{o}f hypothesis (GLH)]
Let $t \in \mathbb{R}.$ Then for any $\epsilon>0,$ we have
$L(f,\frac{1}{2}+it) \ll_{\epsilon} D^{\epsilon} \prod_{j=1}^m (1+|t+\kappa_j|)^{\epsilon}.$
\end{conjecture}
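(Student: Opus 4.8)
The Generalized Lindel\"of hypothesis is a famous open problem, of essentially the same depth as the Generalized Riemann hypothesis in the range it concerns, so what follows is not a complete proof but an outline of the standard circle of ideas by which bounds of this strength are attacked; within the present paper the statement is assumed rather than established. The natural starting point is the approximate functional equation for $L(f,s)$, which expresses $L(f,\tfrac12+it)$ as a sum of two Dirichlet polynomials of length roughly $\sqrt{\mathfrak{q}(f,t)}$, where $\mathfrak{q}(f,t):=D\prod_{j=1}^m(1+|t+\kappa_j|)$ is the analytic conductor. The trivial estimate for such a polynomial already yields the convexity bound $L(f,\tfrac12+it)\ll_{\epsilon}\mathfrak{q}(f,t)^{1/4+\epsilon}$, and the task is to save the entire quarter-power.

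The plan would be to control a high moment $\int_{T}^{2T}|L(f,\tfrac12+it)|^{2k}\,dt$, or a suitable conductor-aspect average over a family, by $\mathfrak{q}^{o(1)}$ times its diagonal contribution; by H\"older's inequality a sharp such bound reduces the Lindel\"of estimate to controlling off-diagonal correlations of the coefficients $\lambda_f(n)$. For the second moment ($k=1$) one would open the square, apply the Vorono\u{\i} summation formula (available for $L(f,s)$ under the hypotheses of Section \ref{sec:pre}) to the dual sum, and estimate the resulting exponential sums, using the weak Ramanujan bound to keep the relevant partial sums of $|\lambda_f(n)|^2$ under control; in the conductor aspect one would instead average over a family and invoke a large sieve inequality. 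Amplification — inserting a short Dirichlet polynomial $\sum_{\ell\leq L}\overline{\lambda_f(\ell)}\,\ell^{-it}$ before averaging — is the device that upgrades such family moment bounds into a pointwise subconvex, and ultimately Lindel\"of, bound for an individual $L(f,s)$.

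The main obstacle is exactly the one that keeps the problem open: for degree $m\geq 2$ even the second moment sits at or beyond the reach of the Vorono\u{\i} formula and the large sieve, so it does not by itself give the full exponent $1/4$; and the higher moments $k\geq 2$ that would be needed to reach the Lindel\"of bound are not known unconditionally for any $L$-function of degree $\geq 2$. Hence GLH is taken here as a working hypothesis, and the contribution of this paper is to show that, \emph{granting} GLH together with the weak form of the generalized Ramanujan conjecture and the Rankin--Selberg partial-sum estimate, one obtains the asserted normality of coefficient sums in short intervals.
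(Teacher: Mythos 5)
You are right that this is a statement the paper \emph{assumes} rather than proves: it is labelled a Conjecture, and the paper offers no argument for it whatsoever. Your summary of the standard circle of ideas (approximate functional equation, convexity bound $\mathfrak{q}^{1/4+\epsilon}$, moments and amplification) is accurate, your identification of why these fall short of the full Lindel\"of exponent is correct, and your reading of the paper's logic --- that GLH is a working hypothesis feeding into the main theorem --- matches the text. There is nothing to compare against on the paper's side, since no proof is given or attempted there.
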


\begin{conjecture}[Weak Generalized Ramanujan conjecture ($\text{GRC}^{\flat}$)]
There exists $\eta>0$ such that for any $n \geq 1,$ we have
$\lambda_f(n) \ll_{f} n^{\frac{1}{6}-\eta}.$
\end{conjecture}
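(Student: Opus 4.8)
\emph{Proof proposal.} The assertion is the Ramanujan bound for $\lambda_f$ with an exponent window of size $\tfrac16$, and it lies beyond the purely analytic axioms of Section~\ref{sec:pre}: absolute convergence for $\Re(s)>1$ only gives $\lambda_f(n)\ll_\epsilon n^{1+\epsilon}$; the functional equation, via a truncated Perron/Vorono\u{\i} argument, gives $\sum_{n\leq x}\lambda_f(n)-\operatorname{Res}_{s=1}\bigl(L(f,s)x^s/s\bigr)\ll_\epsilon x^{(m-1)/(m+1)+\epsilon}$ (Landau), with no known refinement --- even under GLH --- pushing the exponent below $\tfrac13$, so that $\lambda_f(n)\ll n^{(m-1)/(m+1)+\epsilon}$ by differencing consecutive partial sums; and a Rankin--Selberg partial-sum bound $\sum_{n\leq x}\lambda_f(n)^2\ll_\epsilon x^{1+\epsilon}$ gives $\lambda_f(n)\ll n^{1/2+\epsilon}$. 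All of these exponents exceed $\tfrac16$ for every $m\geq 2$. So the plan is to use, for this hypothesis only, the multiplicative structure of $\lambda_f$: assume $L(f,s)=\prod_p\prod_{j=1}^m(1-\alpha_{j,p}p^{-s})^{-1}$ is the standard $L$-function of a self-dual cuspidal automorphic representation of $GL_m$, and deduce $\text{GRC}^{\flat}$ from known progress toward the generalized Ramanujan conjecture.

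\emph{Step 1 (reduction to a local bound).} If $|\alpha_{j,p}|\leq p^{\theta}$ for every $j$ and every finite prime $p$, then expanding the Euler product gives $|\lambda_f(p^a)|\leq\tau_m(p^a)\,p^{a\theta}$, hence $|\lambda_f(n)|\leq\tau_m(n)\,n^{\theta}\ll_\epsilon n^{\theta+\epsilon}$; it therefore suffices to produce an admissible exponent $\theta<\tfrac16$, since taking $\epsilon<\tfrac16-\theta$ then yields $\eta=\tfrac16-\theta-\epsilon>0$. \emph{Step 2 (degree two).} For $m=2$ this is a theorem: Deligne's bound gives $\theta=0$ for holomorphic forms, and the Kim--Sarnak bound $\theta\leq\tfrac7{64}<\tfrac16$ (via $\operatorname{Sym}^4$ functoriality and Rankin--Selberg positivity) handles Maass forms. \emph{Step 3 (higher degree).} For general $m\geq 3$ the plan is to invoke either (a) functoriality, e.g.\ if $f=\operatorname{Sym}^k g$ for a holomorphic $GL_2$ form $g$ then Deligne again gives $\theta=0$; or (b) GLH for auxiliary $L$-functions: a standard consequence of GLH for the Rankin--Selberg power $L(f^{\otimes k}\times\overline{f^{\otimes k}},s)$ (using the nonnegativity of its coefficients) is $\sum_{n\leq x}|\lambda_f(n)|^{2k}\ll_\epsilon x^{1+\epsilon}$, whence $|\lambda_f(n)|\ll n^{1/(2k)+\epsilon}$, which beats $n^{1/6-\eta}$ once $k\geq 4$; unconditionally it of course follows from the full generalized Ramanujan conjecture, which gives $\theta=0$.

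The step I expect to be the main obstacle is Step~3 in full generality. For cuspidal $GL_m$ with $m\geq 3$ the best unconditional pointwise bound is the Luo--Rudnick--Sarnak exponent $\theta=\tfrac12-\tfrac1{m^2+1}$, which exceeds $\tfrac16$ for all $m\geq 2$; even in the self-dual degree-three case $f=\operatorname{Sym}^2 g$ one only obtains $\theta\leq 2\cdot\tfrac7{64}=\tfrac7{32}>\tfrac16$ from Kim--Sarnak unless $g$ is holomorphic. Moreover, when $L(f,s)$ is not assumed to carry an Euler product at all --- which the remark in Section~\ref{sec:pre} explicitly permits --- none of this multiplicative machinery is available and only the analytic estimates above apply, which fall short. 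This is why the bound is recorded here as a conjecture rather than a lemma: $\text{GRC}^{\flat}$ is a theorem for $m\leq 2$ and for forms reachable from $GL_2$ by functoriality with a holomorphic source, it follows from the generalized Ramanujan conjecture unconditionally, and it follows from GLH applied to sufficiently many Rankin--Selberg $L$-functions built from $f$; in the generality used here it is taken as a standing hypothesis.
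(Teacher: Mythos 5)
You have correctly recognized that $\text{GRC}^{\flat}$ is labeled a conjecture and taken as a standing hypothesis: the paper offers no proof, only the Remark immediately following it, which notes (exactly as in your Step~2) that the bound holds for holomorphic cusp forms by Deligne and for Hecke--Maass cusp forms by Kim--Sarnak since $7/64<1/6$. Your additional survey of the higher-degree landscape (Luo--Rudnick--Sarnak, GLH for Rankin--Selberg powers, functoriality from holomorphic $GL_2$) is a reasonable expansion of that remark, and your closing observation --- that in the paper's generality no Euler product is even assumed, so the multiplicative machinery is unavailable and the statement must stand as a hypothesis --- is exactly the right diagnosis.
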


Similar to Weyl's subconvexity bound, the exponent $1/6$ here is considered to be critical due to its applications to many classical problems in analytic number theory (see \cite[Section 5.3]{MR3020828} for instance), including shifted convolution sums and hyperbolic circle problem (see \cite[Section 8]{MR1923967} for instance).

\begin{remark}
It is well-known that the
normalized Fourier coefficients (Hecke eigenvalues) of holomorphic or Hecke--Maass cusp forms satisfy $ |\lambda_f(n)| \leq \tau(n)$ by Deligne \cite{MR0340258} or $ |\lambda_f(n)| \leq \tau(n) n^{7/64}$ by Kim and Sarnak \cite{MR1937203}, respectively. Therefore, the conjecture $\text{GRC}^{\flat}$ holds for such forms.
\end{remark}



\section{Main result} \label{sec:main}

With the definitions and conjectures in place, we shall state our main result as follows.

\begin{theorem} \label{thm:real}
Assume GLH and $\text{GRC}^{\flat}$ for a fixed $L$-function $L(f,s)$.
Let $X, \delta>0$ be real numbers for which $\delta=o_{X \to \infty}(1)$ but
$\log\left( 1/\delta \right)=o_{X \to \infty}(\log X).$ 
Suppose 
\begin{align} \label{eq:rankinselberg}
\sum_{n \leq y}\lambda_f(n)^2
=(c_f+o_{y \to \infty}(1)) y (\log  2y)^{r-1}
\end{align}
for some constant $c_f>0$ and integer $r \geq 1.$ Then,
if $x \in [X, 2X]$ is chosen uniformly at random, we have the convergence in distribution to a standard Gaussian
\begin{align} \label{eq:main1}
\frac{\Delta_f((x+\delta)^m)-\Delta_f(x^m)}{x^{\frac{m-1}{2}}\sigma_f(\delta)}
\xrightarrow[]{d} \mathcal{N}(0,1)
\end{align}
as $X \to \infty,$ where 
\begin{align*}
{\sigma_f(\delta)}^2:=
c_f m^r \delta \log^{r-1} \left(\frac{1}{\delta} \right),
\end{align*}
i.e., for any fixed real numbers $\alpha<\beta,$ we have
\begin{align*} 
\lim_{X \to \infty }\frac{1}{X}\mathop{\mathrm{meas}}\left\{ x \in [X,2X] \,: \, \frac{  \Delta_f((x+\delta)^m)-\Delta_f(x^m) }{x^{\frac{m-1}{2}}\sigma_f(\delta)} \in (\alpha,\beta] \right\}
=\frac{1}{\sqrt{2\pi}}\int_{\alpha}^{\beta}e^{-\frac{t^2}{2}}dt.
\end{align*}
For $m=2,$ the assumption of GLH can be dropped.

\end{theorem}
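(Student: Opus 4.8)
The plan is to realize the short coefficient sum via the Vorono\u{\i} summation formula, which serves as the degree-$m$ analogue of P\'{o}lya's expansion \eqref{eq:gl1} and von Mangoldt's explicit formula \eqref{eq:glinfty}. Applying Vorono\u{\i} summation to $\sum_{x^m < n \leq (x+\delta)^m} \lambda_f(n)$ (with a suitable smoothing to control the edge effects) should produce a dual sum of the shape
\begin{align*}
\Delta_f((x+\delta)^m) - \Delta_f(x^m) \approx c\, x^{\frac{m-1}{2}} \sum_{n \leq N} \frac{\lambda_f(n)}{n^{\frac{m-1}{2m}}} \cdot \frac{\sin(\pi m \delta (nx)^{1/m})}{(n/?)^{?}} \cos\bigl( 2\pi m (nx)^{1/m} + \varphi \bigr),
\end{align*}
truncated at a length $N \asymp 1/\delta^m$ (up to lower-order factors), with the GLH (or, for $m=2$, unconditional bounds on $L(f,s)$) entering to justify truncating the dual sum and bounding the tail. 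The weak Ramanujan bound $\text{GRC}^{\flat}$ with exponent $1/6$ is exactly what one needs to control the error terms from the smoothing and from the transition between the sharp-cutoff sum and its smoothed version, since the relevant archimedean analysis of the Bessel-type kernel in Vorono\u{\i} summation contributes a factor that is acceptable only when the coefficients do not grow faster than $n^{1/6 - \eta}$.

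Next I would establish a central limit theorem for the random variable
\[
S(x) := \sum_{n \leq N} b_f(n)\, \cos\bigl( 2\pi m (nx)^{1/m} + \varphi \bigr), \qquad x \in [X, 2X],
\]
where $b_f(n)$ collects the coefficient $\lambda_f(n)$ together with the smooth weights. The strategy is the method of moments: compute $\frac{1}{X}\int_X^{2X} S(x)^{2k}\, dx$ and show it converges to the $(2k)$-th Gaussian moment $(2k-1)!!$, after normalizing by $\sigma_f(\delta)$. Expanding the $(2k)$-th power gives sums over $2k$-tuples $(n_1,\ldots,n_{2k})$ of products $\prod b_f(n_i)$ times oscillatory factors $\cos(2\pi m (n_ix)^{1/m} + \varphi)$; integrating over $x$ in $[X,2X]$ produces main-term contributions only from the ``diagonal'' configurations where the $n_i$ pair up so that the phases cancel (i.e.\ $\sum_{i} \pm (n_i)^{1/m} = 0$ to within $\ll 1/X$, which for fixed indices forces exact pairing once $X$ is large), while off-diagonal tuples contribute a power-saving error via non-stationary phase / repeated integration by parts. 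The variance computation — the $k=1$ case — is where the hypothesis \eqref{eq:rankinselberg} enters: the diagonal is $\sum_{n \leq N} b_f(n)^2 \asymp \sum_{n \leq 1/\delta^m} \lambda_f(n)^2 / n^{(m-1)/m} \asymp c_f (1/\delta^m) \cdot (\text{log power})$, and a partial summation using \eqref{eq:rankinselberg} should yield exactly $\sigma_f(\delta)^2 = c_f m^r \delta \log^{r-1}(1/\delta)$ after accounting for the $x^{m-1}$ normalization and the factor of $1/2$ from $\cos^2$. The Rankin--Selberg asymptotic with its $\log^{r-1}$ is precisely calibrated to give the stated variance, and the condition $\log(1/\delta) = o(\log X)$ guarantees $N = 1/\delta^m$ is small enough (a tiny power of $X$, or less) for the diagonal to dominate and for the smoothing errors to be negligible compared with $x^{(m-1)/2}\sigma_f(\delta)$.

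The main obstacle, I expect, is twofold. First, handling the combinatorics of the higher moments: one must show that \emph{all} non-diagonal $2k$-tuples contribute negligibly, which requires a careful lower bound on $|\sum_i \pm n_i^{1/m}|$ when it is nonzero (a gap estimate for sums of $m$-th roots, or an effective form of linear independence of $\{n^{1/m}\}$ over $\mathbb{Q}$) together with a uniform stationary-phase / integration-by-parts bound that beats the total number of tuples, all while $N$ grows with $X$. Second, and more delicate, is making the passage from the sharp short-interval sum to a smooth one sufficiently precise: the difference involves $\lambda_f(n)$ summed against a bump function near $x^m$ and $(x+\delta)^m$, and controlling this on average over $x$ — rather than pointwise — is what forces the use of $\text{GRC}^{\flat}$ rather than merely $\sum \lambda_f(n)^2 \ll y^{1+\epsilon}$; one likely needs a second moment bound for $\Delta_f$ in very short windows, extracted again from \eqref{eq:rankinselberg} via Vorono\u{\i}. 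For $m = 2$ the kernel in the Vorono\u{\i} formula is explicit enough (Bessel functions $J_0, Y_0$, or a Vorono\u{\i} formula of classical Dirichlet-divisor type) that one can bound the relevant integral transforms unconditionally, which is why GLH can be dropped there; for general $m$ one does not have such explicit control of the archimedean factor without GLH.
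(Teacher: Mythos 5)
Your high-level plan — pass to the Vorono\u{\i} dual sum, truncate at $N$ roughly $\delta^{-m}$, prove a CLT for the resulting trigonometric sum by the method of moments, and use linear independence of $m$-th roots with a quantitative gap bound to kill the off-diagonal — matches the paper's architecture. But there are three substantive gaps in the details.

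First, your diagonal is too coarse. You say the main contribution comes from tuples where the $n_i$ ``pair up'' so that $\sum_i \pm n_i^{1/m}=0$. For $m\geq 2$ the vanishing of such an alternating sum does \emph{not} force pairing: writing $n_j=q_jr_j^m$ with $q_j$ the $m$-th power-free part, the relation $\sum_j\epsilon_j n_j^{1/m}=0$ decomposes into a partition of $\{1,\dots,k\}$ into blocks $S_i$ sharing a common $m$-th power-free kernel $q_i$, with an integer relation $\sum_{j\in S_i}\epsilon_j r_j=0$ inside each block. Blocks of size one are impossible, blocks of size two give the Gaussian moment, but blocks of size $\geq 3$ exist and must be shown negligible. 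This is precisely where the paper invokes $\text{GRC}^{\flat}$: the exponent $1/6-\eta$ is exactly what is needed so that the $L^1$-norm of these $|S|\geq 3$ block contributions (Lemma~\ref{lem:sumdqs}) is $O(\delta^{m((1/3+\eta)|S|-1)})=o(\sigma_f(\delta;N)^{|S|})$. Your proposal attributes $\text{GRC}^{\flat}$ to smoothing errors, which is a misdiagnosis: without $\text{GRC}^{\flat}$ the clustered diagonal of the higher moments simply does not close.

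Second, the treatment of the sharp-cutoff error is quite different from what you sketch. You propose smoothing the coefficient sum and then tracking the smooth-vs-sharp discrepancy, but the paper explicitly avoids smooth weights (it notes they offer little advantage here) and works with the Friedlander--Iwaniec sharp Vorono\u{\i} formula. For $m\geq 3$ its pointwise error term is too large, so the paper instead bounds the $L^2$-deviation $\mathbb{E}^W_{x\sim X}((\Delta_f(x,\delta)-\Delta_f(x,\delta;N))^2)$ by going back to Perron's formula, shifting the contour, invoking the Friedlander--Iwaniec saddle-point estimate to recover the truncated dual sum, and then using GLH to control the vertical integral on the critical line via $\widehat W$ decay (Lemma~\ref{prop:GLH}). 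That is the only place GLH enters. Your sketch of this step — ``a second moment bound for $\Delta_f$ in very short windows, extracted again from \eqref{eq:rankinselberg} via Vorono\u{\i}'' — does not supply the GLH input and would not close the estimate for $m\geq 3$.

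Third, a smaller but nontrivial point: the paper's working quantity is $\Delta_f(x,\delta)=\Delta_f((x+\delta)^m)/(x+\delta)^{(m-1)/2}-\Delta_f(x^m)/x^{(m-1)/2}$, with each term carrying its own normalizing power, not $(\Delta_f((x+\delta)^m)-\Delta_f(x^m))/x^{(m-1)/2}$. This choice is what makes the Vorono\u{\i} dual sum collapse cleanly into $\sin(\pi\breve n\delta)\cos(\pi\breve n(2x+\delta)+\varphi)$, and a separate $L^2$ lemma (Lemma~\ref{lem:grh}) is then needed to convert back to the statement of the theorem. Your proposal does not account for this renormalization, which affects the variance constant you are trying to match.
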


\begin{remark}
The Rankin--Selberg type partial sum estimate (\ref{eq:rankinselberg}) is known for many automorphic objects, especially the Hecke eigenvalues $A(n,1,\ldots,1)$ of Hecke--Maass cusp forms for $SL_m(\mathbb{Z})$ (see \cite[Theorem 6]{MR4206430}).
\end{remark}


The theorem can be regarded as the short-interval counterpart of the work by Kowalski and Ricotta \cite[Corollary B]{MR3248485} in the self-dual case, i.e., coefficients are always real. Compared to their work, the source of normality here stems from the oscillation of Bessel transforms instead of hyper-Kloosterman sums. In this paper, the main difficulty lies in controlling the errors incurred from approximating these Bessel transforms via the saddle point method (see \cite[p. 498]{MR2134400}). Therefore, unlike in their work, the Vorono\u{\i} summation formula with smooth weights offers little advantage. Nonetheless, our proof is more elementary since it does not rely on deep results regarding the equidistribution of hyper-Kloosterman sums.

It is certainly plausible to prove an analogous result for non-self-dual $L$-functions, i.e., coefficients are not always real. In such cases, the short coefficient sum would follow a bivariate Gaussian distribution in the complex plane. Besides, the proof should also work for $L$-functions belonging to the Selberg class (see \cite{MR1689554} for definition) conditionally. To avoid repetition, we refrain from pursuing these further here.

In the following, we provide several interesting examples.




\begin{example} Let $\lambda_f(n)$ be Hecke eigenvalues of a holomorphic Hecke cusp form or a Hecke--Maass cusp form $f$ of level $1$ for $n \in \mathbb{N}.$ Then Theorem \ref{thm:real} holds unconditionally with $m=2$
and ${\sigma_f(\delta)}^2=\frac{12}{\pi^2}L(1,\mathop{\mathrm{Sym}}^2 f)\delta.$ For $f$ holomorphic, this is essentially \cite[Theorem 1.4]{MR3504333} with respect to the ``square-root density" (see Remark \ref{rmk}).
\end{example}

\begin{example} \label{eq:tauk}
Given an integer $k \geq 2,$ let $\lambda_f(n)=\tau_k(n)$ for $n \in \mathbb{N}.$ Assume GLH for $k \geq 3$. Then Theorem \ref{thm:real} holds with $m=k$ and ${\sigma_f(\delta)}^2=c_fk^{(k^2)}\delta \log^{k^2-1}\left( \frac{1}{\delta} \right),$ where 
\begin{align*}
c_f=\prod_p \left( 1-\frac{1}{p} \right)^{k^2}\sum_{j=0}^{\infty}{j+k-1 \choose j}^2\frac{1}{p^j}.
\end{align*}
In particular, for $k=2,$ this is essentially  
\cite[Theorem 1.3]{MR3504333} with ${\sigma_f(\delta)}^2=\frac{16}{\pi^2}\delta \log^3\left( \frac{1}{\delta} \right)$ with respect to the ``square-root density".
\end{example}

\begin{example}
Given a Galois extension $K/\mathbb{Q}$ of degree $d,$ let $\lambda_f(n)=r_K(n)$ be the number of ideals of norm $n$ for $n \in \mathbb{N}.$  Assume GLH for $d \geq 3$. Then Theorem \ref{thm:real} holds with $m=d$
and ${\sigma_f(\delta)}^2=c_f d^d \delta 
\log^{d-1}\left( \frac{1}{\delta}\right).$\footnote{The satisfaction of assumption (\ref{eq:rankinselberg}) is guaranteed by \cite[Theorem 3]{MR0153643}.} 
In particular, for $K=\mathbb{Q}(i),$ this corresponds to the result on lattice point counts in thin annuli of Hughes and Rudnick \cite[Theorem 1.3]{MR2039790} with
${\sigma_f(\delta)}^2=\delta \log\left( \frac{1}{\delta} \right).$ 
\end{example}

\begin{remark} \label{rmk}
Suppose the conditions of Theorem \ref{thm:real} are satisfied. Let $H=H(x)=\delta x^{1-\frac{1}{m}}.$ Then it is certainly plausible to establish a central limit theorem for 
\begin{align*}
\frac{\Delta_{f}(x+H)-\Delta_{f}(x)}{x^{\frac{m-1}{2m}}\sigma_f(\delta)}
\end{align*}
as in \cite{MR3504333}. However, our proof suggests that the left-hand side of (\ref{eq:main1}) is equally natural (see \cite[Theorem 1.3]{MR2039790}). In other words, our normality holds with respect to the ``$m$-th root density" instead of the natural density, resembling the logarithmic density for primes in short intervals as discussed in the introduction.
\end{remark}

\section{Preliminaries}

The Vorono\u{\i} summation formula (without twists), as formulated by Friedlander and Iwaniec lies at the heart of the matter; although to prove the Theorem \ref{thm:real}, we do not apply the formula directly as their error term is not negligible for $m \geq 3.$

\begin{proposition}[Vorono\u{\i} summmation formula {\cite[Theorem 1.2]{MR2134400}}] \label{prop:fi}
Let $x \geq D^{1/2m}.$ Then for any $1 \leq N \leq x^{m},$ we have
\begin{align} \label{eq:Voronoi}
\Delta_f(x^m)= \frac{w}{\pi}\cdot x^{\frac{m-1}{2}}\sum_{n \leq N} 
\frac{\lambda_f(n)}{\sqrt{n}} \cdot \frac{\sin\left( 2\pi \breve{n}x+\varphi\right)}{\sqrt{\breve{n}}}+O_{\boldsymbol{\kappa}, \epsilon} ( 
(N/D)^{-\frac{1}{m}}x^{m-1+\epsilon}  ),
\end{align}
where $\breve{n}:=m(n/D)^{1/m}$ for $n \in \mathbb{N}$ and 
$\varphi:=\frac{\pi}{2}\left( \frac{m-1}{2}-k \right).$\footnote{A typographical error is present in the statement of \cite[Theorem 1.2]{MR2134400}: $(\pi m)^{-\frac{1}{2}}$ should be corrected to $\pi m^{-\frac{1}{2}}.$}
\end{proposition}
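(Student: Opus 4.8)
The identity~(\ref{eq:Voronoi}) is the Vorono\u{\i} summation formula of Friedlander and Iwaniec, so the plan is to recall how it is derived from Mellin inversion, the functional equation, and a saddle-point analysis; I will only describe the steps. First I would write, by Perron's formula (interpreted as a limit of truncated integrals), for $c=1+\epsilon$,
\[
\Delta_f(x^{m})=\frac{1}{2\pi i}\int_{(c)}L(f,s)\,\frac{x^{ms}}{s}\,ds-\mathop{\mathrm{Res}}_{s=1}L(f,s)\frac{x^{ms}}{s}.
\]
Since $L(f,s)$ converges absolutely for $\Re(s)>1$ and satisfies the functional equation, Phragm\'en--Lindel\"of supplies the convexity bound $L(f,\sigma+it)\ll_{\boldsymbol{\kappa},\sigma}(1+|t|)^{A}$ on vertical lines, which lets me shift the contour to $\Re(s)=-a$ for a fixed $a\in(0,1)$; the only pole crossed, at $s=1$, cancels the residue term, so that
\[
\Delta_f(x^{m})=\frac{1}{2\pi i}\int_{(-a)}L(f,s)\,\frac{x^{ms}}{s}\,ds.
\]

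Next, on $\Re(s)=-a$ I would apply the functional equation in the form $L(f,s)=w\,\gamma_\infty(s)\,L(f,1-s)$, where $\gamma_\infty(s):=D^{\frac12-s}L_\infty(f,1-s)/L_\infty(f,s)$, expand $L(f,1-s)=\sum_{n\ge1}\lambda_f(n)n^{s-1}$ (valid by self-duality, absolutely convergent as $\Re(1-s)=1+a>1$), and exchange summation and integration to reach
\[
\Delta_f(x^{m})=w\sum_{n=1}^{\infty}\frac{\lambda_f(n)}{n}\,\mathcal{B}(nx^{m}),\qquad\mathcal{B}(\xi):=\frac{1}{2\pi i}\int_{(-a)}\gamma_\infty(s)\,\xi^{s}\,\frac{ds}{s}.
\]
From $L_\infty(f,s)=\prod_{j}\pi^{-(s+\kappa_j)/2}\Gamma((s+\kappa_j)/2)$ one computes $\gamma_\infty(s)=D^{\frac12-s}\pi^{m(s-\frac12)}\prod_{j=1}^{m}\Gamma\!\big(\tfrac{1-s+\kappa_j}{2}\big)\big/\Gamma\!\big(\tfrac{s+\kappa_j}{2}\big)$, which exhibits $\mathcal{B}$ as a hyper-Bessel (Meijer $G$) integral of order $m$.

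The heart of the matter is the asymptotics of $\mathcal{B}(\xi)$ as $\xi\to\infty$. Writing $\gamma_\infty(s)\xi^{s}=e^{\phi_\xi(s)}$ and applying Stirling's formula to the $\Gamma$-ratio, the function $\phi_\xi$ acquires a conjugate pair of saddle points on $\Re(s)=\tfrac{m-1}{2m}$; the saddle-point method (cf.\ \cite[p.~498]{MR2134400}) then yields
\[
\mathcal{B}(\xi)=\frac{D^{1/(2m)}}{\pi\sqrt{m}}\,\xi^{\frac{m-1}{2m}}\sin\!\Big(2\pi m(\xi/D)^{1/m}+\varphi\Big)+O_{\boldsymbol{\kappa}}\!\big(\xi^{\frac{m-1}{2m}-\frac1m}\big),
\]
the amplitude and frequency being read off from the curvature and value of $\phi_\xi$ at the saddle, and the phase $\varphi=\tfrac{\pi}{2}\big(\tfrac{m-1}{2}-k\big)$ from the arguments of the $\Gamma$-factors there together with the $i^{-1}$ coming from $(2\pi i)^{-1}$. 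Inserting $\xi=nx^{m}$ and simplifying through $\breve{n}=m(n/D)^{1/m}$ turns the leading term of the $n$-th summand into $\tfrac{w}{\pi}x^{(m-1)/2}\lambda_f(n)\,n^{-1/2}\breve{n}^{-1/2}\sin(2\pi\breve{n}x+\varphi)$.

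It remains to restrict the $n$-sum to $n\le N$. Since the amplitude $\xi^{(m-1)/(2m)}$ grows with $\xi$, the dual series converges only conditionally for $m\ge2$, so I would not bound the tail $n>N$ term by term; instead I would split it dyadically and sum by parts against the oscillatory factor $\sin(2\pi\breve{n}x+\varphi)$, whose phase $2\pi m(n/D)^{1/m}x$ has monotone, non-stationary $n$-derivative $\asymp xn^{1/m-1}$, obtaining the claimed bound $\ll_{\boldsymbol{\kappa},\epsilon}(N/D)^{-1/m}x^{m-1+\epsilon}$; the cumulative $O_{\boldsymbol{\kappa}}(\cdot)$ error of $\mathcal{B}$ over $n\le N$ converges absolutely and is of smaller order, while the hypothesis $x\ge D^{1/2m}$ ensures the saddle of $\phi_{nx^m}$ lies in the valid range for every $n\ge1$. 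The step I expect to be the main obstacle is this uniform saddle-point analysis of $\mathcal{B}(\xi)$: pinning down the exact constant $w/\pi$, the exact phase $\varphi$, and the exact exponent $-1/m$ in the error, with explicit control of the dependence on $\boldsymbol{\kappa}$, requires Stirling with remainder and a careful contour deformation near the two saddles --- and it is precisely this error term that, as the authors remark, ceases to be negligible once $m\ge3$. The Perron truncation, the interchange of sum and integral, and the tail estimate are routine by comparison.
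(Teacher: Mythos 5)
This proposition is not proved in the paper: it is quoted verbatim (with a footnote correcting a typographical error in the constant) from Friedlander and Iwaniec, \cite[Theorem~1.2]{MR2134400}. There is therefore no in-paper proof to compare your sketch against; the paper takes the formula as a black box and, as explained just before Proposition~\ref{prop:fi} and again in the proof of Lemma~\ref{prop:GLH}, deliberately avoids using it directly for $m\ge 3$ because the error term $O((N/D)^{-1/m}x^{m-1+\epsilon})$ is too large, instead redoing the contour-shift and saddle-point analysis with extra care and an $L^2$ (rather than pointwise) bound.

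Your outline is a reasonable reconstruction of the route Friedlander--Iwaniec take (Perron, contour shift, functional equation, saddle-point asymptotics for the hyper-Bessel transform $\mathcal{B}$, partial summation to truncate the dual series), and the leading term you write for $\mathcal{B}(\xi)$ does reduce, after substituting $\xi = nx^m$, to the summand in~(\ref{eq:Voronoi}). Two caveats. First, shifting the contour from $\Re(s)=1+\epsilon$ to $\Re(s)=-a$ also crosses the simple pole of $1/s$ at $s=0$, contributing $L(f,0)\ll D^{1/2}$, which you do not account for (the paper records precisely this constant in~(\ref{eq:l(f,0)})). Second, the assertion that ``the cumulative $O_{\boldsymbol{\kappa}}(\cdot)$ error of $\mathcal{B}$ over $n\le N$ converges absolutely and is of smaller order'' is not justified: with the per-term bound $O(\xi^{(m-1)/(2m)-1/m})$ and $\xi=nx^m$, the accumulated contribution is of size $x^{(m-3)/2}\sum_{n\le N}|\lambda_f(n)|\,n^{-(m+3)/(2m)}$, and checking that this is dominated by $(N/D)^{-1/m}x^{m-1+\epsilon}$ uniformly for $N$ up to $x^m$ requires pinning down the exact power of $\xi$ in the saddle-point remainder --- which you correctly identify as the hard part but then treat as settled. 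Since this is exactly the delicate piece of the Friedlander--Iwaniec argument, the sketch cannot be regarded as a proof, only as an indication of strategy; for the actual proof one should consult the cited reference.
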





Instead of studying $x^{-(m-1)/2}(\Delta_f((x+\delta)^m)-\Delta_f(x^m))$ directly, we define a similar quantity
\begin{align*}
\Delta_f(x,\delta):=\frac{\Delta_f((x+\delta)^m)}{(x+\delta)^{(m-1)/2}}-\frac{\Delta_f(x^m)}{x^{(m-1)/2}},
\end{align*}
which is easier to handle, as the Vorono\u{\i} summation formula suggests that
\begin{align} \label{eq:main}
\Delta_f(x,\delta)
\approx \frac{2w}{\pi}\sum_{n \leq N} \frac{\lambda_f(n)}{\sqrt{n}} \cdot \frac{\sin(\pi \breve{n} \delta)}{\sqrt{\breve{n}}}\cdot \cos\left( \pi \breve{n}(2x+\delta)+\varphi\right)
\end{align}
whenever $m(N/D)^{1/m} \delta \to \infty$. This should be compared with (\ref{eq:gl1}) and (\ref{eq:glinfty}). The pseudo-independence of $\cos\left( \pi \breve{n}(2x+\delta)+\varphi\right)$ for $x \in [X, 2X]$ is guaranteed by Lemma \ref{lem:li}, so that heuristically, we have
\begin{align*}
\Delta_f(x,\delta)
\approx \frac{2w}{\pi}\sum_{n \leq N} \frac{\lambda_f(n)}{\sqrt{n}} \cdot \frac{\sin(\pi \breve{n} \delta)}{\sqrt{\breve{n}}} \cdot \mathbb{X}_n,
\end{align*}
where $\mathbb{X}_n \sim \Re(\mathbb{U}(S^1))$ are independent and identically distributed random variables. Then, it follows from the classical central limit theorem that 
$\Delta_f(x,\delta)$ is approximately normal with mean $0$ and variance 
\begin{align} \label{eq:varimp}
\frac{2}{\pi^2} \sum_{n = 1}^{\infty}
\frac{\lambda_f(n)^2}{n} \cdot \frac{\sin^2(\pi \breve{n} \delta)}{\breve{n}}.
\end{align}
To argue rigorously, we compute the moments of (\ref{eq:main}) as in \cite{MR2039790} and  \cite{MR3504333}.

\begin{lemma} \label{lem:li}
Let $q_1, \ldots, q_k$ be distinct $m$-th power-free positive integers. Then $q_1^{1/m}, \ldots, q_k^{1/m}$ are linearly independent over $\mathbb{Q}.$
\end{lemma}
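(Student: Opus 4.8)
The plan is to reduce the claim to a degree computation for a multi-radical extension of $\mathbb{Q}$ and then settle that either by a ramification-theoretic induction or by an appeal to Kneser's theorem on radical extensions. First I would observe that, taking all roots to be the positive real ones, it suffices to prove that $q_1^{1/m}, \ldots, q_k^{1/m} \in \mathbb{R}$ are $\mathbb{Q}$-linearly independent. Let $p_1, \ldots, p_s$ be the primes dividing $q_1 \cdots q_k$ and write $q_i = \prod_{j=1}^s p_j^{e_{ij}}$; the hypothesis that the $q_i$ are distinct and $m$-th power-free says exactly that the exponent vectors $(e_{i1}, \ldots, e_{is})$ are pairwise distinct elements of $\{0, 1, \ldots, m-1\}^s$, so each $q_i^{1/m} = \prod_j (p_j^{1/m})^{e_{ij}}$ is one of the $m^s$ distinct monomials $\prod_j (p_j^{1/m})^{a_j}$ with $0 \le a_j < m$. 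Since every monomial in the $p_j^{1/m}$ reduces, modulo $\mathbb{Q}^\times$, to one of these, they span $F := \mathbb{Q}(p_1^{1/m}, \ldots, p_s^{1/m})$ over $\mathbb{Q}$; hence it is enough to prove $[F:\mathbb{Q}] = m^s$, for then these $m^s$ monomials form a $\mathbb{Q}$-basis of $F$ and the subfamily $\{q_i^{1/m}\}_{i=1}^k$ is automatically $\mathbb{Q}$-linearly independent.

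To obtain $[F:\mathbb{Q}] = m^s$ I would induct on $s$. Writing $M := \mathbb{Q}(p_1^{1/m}, \ldots, p_{s-1}^{1/m})$, of degree $m^{s-1}$ by induction, the crux is that $x^m - p_s$ is irreducible over $M$, which gives $[F:M] = m$. By the standard irreducibility criterion for binomials --- and since the exceptional $4 \mid m$ case is void because $M \subset \mathbb{R}$ while $-4p_s < 0$ --- reducibility would force $p_s^{1/\ell} \in M$ for some prime $\ell \mid m$. But then $\mathbb{Q}(p_s^{1/\ell})$, which has degree $\ell$ over $\mathbb{Q}$ and in which $p_s$ is totally ramified (as $x^\ell - p_s$ is Eisenstein at $p_s$), would be a subfield of $M$, so $p_s$ would ramify in $M$; since the discriminant of $\mathbb{Q}(p_j^{1/m})$ divides $m^m p_j^{\,m-1}$, the field $M$ is unramified outside $m\,p_1 \cdots p_{s-1}$, and as $p_s \notin \{p_1, \ldots, p_{s-1}\}$ this is a contradiction whenever $p_s \nmid m$.

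The only gap in this hands-on argument is the case $p_s \mid m$, where one must rule out (possibly wild) ramification of $p_s$ in $M$; this is the technical heart of the lemma. One clean way to dispose of it --- indeed of everything simultaneously --- is Kneser's theorem. Set $A := \mathbb{Q}^\times \langle q_1^{1/m}, \ldots, q_k^{1/m}\rangle \subset \mathbb{R}^\times$; then $A/\mathbb{Q}^\times$ is finite of exponent dividing $m$, and because $A \subset \mathbb{R}$ contains no primitive root of unity of odd order and no primitive fourth root of unity, the hypotheses of Kneser's theorem hold vacuously, giving $[\mathbb{Q}(A):\mathbb{Q}] = (A : \mathbb{Q}^\times)$. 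A short valuation check shows the $q_i^{1/m}$ represent pairwise distinct cosets of $\mathbb{Q}^\times$ in $A$: if $(q_i/q_j)^{1/m} \in \mathbb{Q}$ then $q_i/q_j \in (\mathbb{Q}^\times)^m$, which forces $q_i = q_j$ since both are $m$-th power-free (compare $p$-adic valuations). Hence these cosets extend to a $\mathbb{Q}$-basis of $\mathbb{Q}(A)$ given by coset representatives, and $q_1^{1/m}, \ldots, q_k^{1/m}$ are $\mathbb{Q}$-linearly independent. (Alternatively, this is a classical theorem of Besicovitch on fractional powers of integers, which one could simply cite.) The main difficulty is thus entirely algebraic and concentrated in the primes dividing $m$ --- or, more efficiently, in invoking Kneser's theorem --- with no analytic content whatsoever.
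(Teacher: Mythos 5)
Your proposal is correct, but it goes a genuinely different route from the paper. The paper proves the lemma directly by a trace computation: suppose $\sum_j c_j q_j^{1/m}=0$, fix $j_0$, set $r_j=(q_j/q_{j_0})^{1/m}$, and take $\operatorname{Tr}^K_{\mathbb{Q}}$ over $K=\mathbb{Q}(r_j:j\neq j_0)$; since each $r_j$ ($j\neq j_0$) is a positive real whose minimal polynomial over $\mathbb{Q}$ is the pure binomial $x^{m_j}-r_j^{m_j}$ with $m_j\geq 2$, its trace vanishes, forcing $c_{j_0}=0$. Your argument instead reduces to the degree formula $[\mathbb{Q}(p_1^{1/m},\ldots,p_s^{1/m}):\mathbb{Q}]=m^s$ and settles it via Kneser's theorem (the hypotheses of which are indeed vacuous here since everything lives in $\mathbb{R}$), or alternatively via the ramification induction you outline. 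Two remarks. First, the wild-ramification gap you correctly flag at $p_s\mid m$ is real, so the induction on its own is incomplete; the Kneser version of your argument is the one that closes. Second, you suggest one could ``simply cite'' Besicovitch --- but the paper explicitly notes that the applicability of Besicovitch's Theorem~2 is \emph{not} straightforward in this generality (the hypotheses there do not match the ``$m$-th power-free'' condition, e.g.\ $q=p^2$ with $m=3$), which is precisely why the author supplies the trace proof. So the more honest comparison is: the paper's trace argument is shorter and entirely self-contained, requiring only the binomial irreducibility criterion over $\mathbb{Q}$; your Kneser route is more conceptual, gives the stronger degree statement $[F:\mathbb{Q}]=m^s$ as a byproduct, but trades self-containedness for an appeal to a substantial classical theorem.
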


\begin{proof}
Unlike in \cite{MR2039790} and \cite{MR3504333}, the applicability of \cite[Theorem 2]{MR2327} is not straightforward. Instead, we provide a different proof. Suppose $\sum_{j=1}^k c_j q_j^{1/m}=0$ for some $c_j \in \mathbb{Q}.$ Let $j_0=1,\ldots,k$ be fixed. Then
\begin{align*}
c_{j_0}=-\sum_{\substack{j=1\\j \neq j_0}}^k c_j \left( q_j/q_{j_0}\right)^{1/m}.
\end{align*}
Let $r_j=(q_j/q_{j_0})^{1/m}$ and $K$ be the finite extension of $\mathbb{Q}$ by adjoining $r_j$ for $j \neq j_0.$ Taking the trace $\operatorname{Tr}_{\mathbb{Q}}^{K}$ on both sides gives 
\begin{align} \label{eq:trace}
[K:\mathbb{Q}]c_{j_0}=-\sum_{\substack{j=1\\j \neq j_0}}^k c_j \operatorname{Tr}_{\mathbb{Q}}^{K} (r_j).
\end{align}
Note that by assumption, there is the smallest $1<m_j \leq m$ for which $r_j^{m_j} \in \mathbb{Q}$ and the minimal polynomial of $r_j$ is $x^{m_j}-r_j^{m_j}.$ Therefore, we have
\begin{align*}
\operatorname{Tr}_{\mathbb{Q}}^{K} 
( r_j )
=[K:\mathbb{Q}(r_j)]\sum_{i=1}^{m_j} \zeta_{m_j}^i r_j =0,
\end{align*}
so that the right-hand side of (\ref{eq:trace}) vanishes and $c_{j_0}=0.$ Since $j_0$ is chosen arbitrarily, we conclude that $c_j=0$ for any $j=1,\ldots,k$ and the lemma follows.
\end{proof}

As we shall prove Theorem \ref{thm:real} via the method of moments, a quantitative lower bound for non-vanishing alternating sums of $q_j^{1/m}$ is required.

\begin{lemma} \label{lem:diss}
For $j=1,\ldots,k,$ let $1 \leq n_j \leq N$ and $\epsilon_j \in \{\pm1\}.$
Suppose
\begin{align*}
\sum_{j=1}^k \epsilon_j n_j^{1/m} \neq 0.
\end{align*}
Then 
\begin{align*}
\left|\sum_{j=1}^k \epsilon_j n_j^{1/m}  \right| \geq (kN^{1/m})^{-(m^{k}-1)}.
\end{align*}

\begin{proof}
Let $G$ be the Galois group of the field extension $K/\mathbb{Q},$ where $K$ is the Galois closure of $\mathbb{Q}( n_1^{1/m}, \ldots, n_k^{1/m}  )$. Note that $|G| = [K:\mathbb{Q}] \leq m^k.$ Then, we consider the product
\begin{align*}
P:=\prod_{\sigma \in G} \sigma \left( \sum_{j=1}^k \epsilon_j n_j^{1/m}  \right)=\prod_{\sigma \in G} \left( \sum_{j=1}^k \epsilon_j \sigma (n_j^{1/m} ) \right).
\end{align*}
Since the set of algebraic integers forms a 
ring, the product $P$ is an algebraic integer. Moreover, since 
$P$ is fixed by $G$, it is also a rational number by Galois theory, and hence $P$ is an integer. As the maps $\sigma \in G$ are injective and $\sum_{j=1}^k \epsilon_j n_j^{1/m} \neq 0$ by assumption, no terms in $P$ vanishes, and thus $|P| \geq 1.$ Then, we have
\begin{align*}
\left|\sum_{j=1}^k \epsilon_j n_j^{1/m}  \right|
\geq \prod_{\substack{\sigma \in G\\ \sigma \neq e}} \left| \sum_{j=1}^k \epsilon_j
\sigma (n_j^{1/m} )
\right|^{-1} \geq (kN^{1/m})^{-(m^k-1)},
\end{align*}
and the lemma follows.
\end{proof}

\end{lemma}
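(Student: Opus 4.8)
Write $\alpha := \sum_{j=1}^k \epsilon_j n_j^{1/m}$, a nonzero real number by hypothesis. The plan is the classical device of bounding $|\alpha|$ from below by observing that the product of its conjugates is a nonzero rational integer, carried out over the number field $L := \mathbb{Q}(n_1^{1/m}, \ldots, n_k^{1/m})$ cut out by the radicals. Since each $n_j^{1/m}$ is a root of the monic integer polynomial $x^m - n_j$, it has degree at most $m$ over $\mathbb{Q}$, so by the tower law $d := [L:\mathbb{Q}] \leq m^k$; moreover $L \subset \mathbb{R}$, so among the $d$ embeddings $\tau_1, \ldots, \tau_d : L \hookrightarrow \mathbb{C}$ we may take $\tau_1$ to be the inclusion, so that $\tau_1(\alpha) = \alpha$. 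Put $P := \prod_{i=1}^d \tau_i(\alpha)$, i.e. the norm of $\alpha$ from $L$ to $\mathbb{Q}$.

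The first step is to check $P \in \mathbb{Z} \setminus \{0\}$. Applying $\tau_i$ to the relation $(n_j^{1/m})^m = n_j$ shows that $\tau_i(n_j^{1/m})$ is again a root of $x^m - n_j$, hence an algebraic integer; consequently each $\tau_i(\alpha)$, and therefore $P$, is an algebraic integer. On the other hand $P$, being a field norm, is a rational number, and a rational algebraic integer lies in $\mathbb{Z}$. Finally $P \neq 0$ since each $\tau_i$ is injective and $\alpha \neq 0$. Hence $|P| \geq 1$.

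The second step is the trivial upper bound on the conjugates. As $\tau_i(n_j^{1/m})$ is an $m$-th root of the positive integer $n_j$, it equals $\zeta\, n_j^{1/m}$ for some $m$-th root of unity $\zeta$, whence $|\tau_i(n_j^{1/m})| = n_j^{1/m} \leq N^{1/m}$; summing over $j$ gives $|\tau_i(\alpha)| \leq k N^{1/m}$ for every $i$. Singling out the factor $\tau_1(\alpha) = \alpha$ and using $d \leq m^k$ and $kN^{1/m} \geq 1$,
\begin{align*}
1 \leq |P| = |\alpha| \prod_{i=2}^{d} |\tau_i(\alpha)| \leq |\alpha|\,(kN^{1/m})^{d-1} \leq |\alpha|\,(kN^{1/m})^{m^k-1},
\end{align*}
and rearranging gives $\bigl|\sum_{j=1}^k \epsilon_j n_j^{1/m}\bigr| \geq (kN^{1/m})^{-(m^k-1)}$, as claimed.

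I do not expect a genuine obstacle here: once $L$ is chosen the argument is essentially formal. The one input that is not pure bookkeeping is the uniform estimate $|\tau_i(n_j^{1/m})| = n_j^{1/m}$, which uses the special feature that all conjugates of a real radical have the same absolute value (false for general algebraic numbers). It is also cleanest to work with the $[L:\mathbb{Q}] \leq m^k$ embeddings of $L$ itself rather than with the Galois group of its Galois closure, so that the exponent in the final bound comes out precisely as $m^k - 1$.
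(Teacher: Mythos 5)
Your proof is correct and rests on the same idea as the paper's: the product of the conjugates of $\alpha=\sum_j\epsilon_j n_j^{1/m}$ is a nonzero rational integer, so it has absolute value at least $1$, while each conjugate is bounded trivially by $kN^{1/m}$. The one genuine difference is bookkeeping, and yours is actually tidier. The paper takes the product over the Galois group $G$ of the Galois closure $K$ of $L:=\mathbb{Q}(n_1^{1/m},\dots,n_k^{1/m})$ and asserts $|G|=[K:\mathbb{Q}]\leq m^k$; but the Galois closure must contain $\zeta_m$ (the conjugates of $n_j^{1/m}$ are $\zeta_m^a n_j^{1/m}$), so $[K:\mathbb{Q}]$ can be as large as $\varphi(m)\,m^k$ (e.g.\ $m=3$, $k=1$, $n_1=2$ gives degree $6>3$). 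Your version works with the $d=[L:\mathbb{Q}]\leq m^k$ embeddings of $L$ itself, for which the norm $N_{L/\mathbb{Q}}(\alpha)=\prod_{i=1}^d\tau_i(\alpha)$ is rational by definition and an algebraic integer because each $\tau_i(n_j^{1/m})$ is a root of $x^m-n_j$; this delivers the stated exponent $m^k-1$ cleanly without any degree overcount. (The paper's imprecision is harmless downstream, since Proposition~\ref{prop:hm} only needs the bound to be polynomial in $N$ with $N=X^{o(1)}$, but your phrasing is the one that literally yields the inequality as written.) Your remark that $|\tau_i(n_j^{1/m})|=n_j^{1/m}$ because all conjugates of a real $m$-th root of a positive integer have the same absolute value is exactly the input that makes the trivial bound uniform over embeddings.
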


Finally, we provide an estimate for the variance (\ref{eq:varimp}).

\begin{lemma} \label{lem:selbergrankin}
Suppose the estimate (\ref{eq:rankinselberg}) holds.
Then as $\delta \to 0^+,$ we have
\begin{align*}
\frac{2}{\pi^2} \sum_{n = 1}^{\infty}
\frac{\lambda_f(n)^2}{n} \cdot \frac{\sin^2(\pi \breve{n} \delta)}{\breve{n}}=&
(c_f+o(1)) m^r \delta \log^{r-1} \left(\frac{1}{\delta} \right)\\
=&(1+o(1))\sigma_f(\delta)^2.
\end{align*}
\end{lemma}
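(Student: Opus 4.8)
The plan is to reduce the infinite sum to a dyadic decomposition and apply partial summation using the Rankin–Selberg estimate (\ref{eq:rankinselberg}), treating the ranges $\breve{n}\delta \ll 1$ and $\breve{n}\delta \gg 1$ separately since $\sin^2$ behaves differently there. Write $S(\delta):=\frac{2}{\pi^2}\sum_{n\geq 1}\frac{\lambda_f(n)^2}{n}\cdot\frac{\sin^2(\pi\breve{n}\delta)}{\breve{n}}$, and recall $\breve{n}=m(n/D)^{1/m}$, so $\frac{1}{\sqrt{n}\cdot\sqrt{\breve{n}}}$ — wait, more precisely $\frac{1}{n\breve{n}}=\frac{D^{1/m}}{m}\cdot\frac{1}{n^{1+1/m}}$. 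Thus
\begin{align*}
S(\delta)=\frac{2D^{1/m}}{\pi^2 m}\sum_{n=1}^{\infty}\frac{\lambda_f(n)^2}{n^{1+1/m}}\sin^2\!\left(\pi m (n/D)^{1/m}\delta\right).
\end{align*}

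First I would isolate the main contribution. For $n$ in the range where $\breve{n}\delta\leq 1$, i.e. roughly $n\leq D(m\delta)^{-m}=:Y$, use $\sin^2(\pi\breve{n}\delta)=\pi^2\breve{n}^2\delta^2+O(\breve{n}^4\delta^4)$; for $n>Y$ replace $\sin^2$ by its average value $\tfrac12$ plus an oscillating piece $-\tfrac12\cos(2\pi\breve{n}\delta)$. In the first range, the main term becomes $2\delta^2\sum_{n\leq Y}\frac{\lambda_f(n)^2}{n}\breve{n}$; applying partial summation to (\ref{eq:rankinselberg}) against the weight $\breve{n}/n=mD^{-1/m}n^{1/m-1}$ gives a contribution of order $\delta^2\cdot Y^{1/m}(\log Y)^{r-1}\cdot c_f m^{\text{(something)}}$, and since $Y^{1/m}\asymp D^{1/m}(m\delta)^{-1}$ this is $\asymp c_f m^r\delta(\log(1/\delta))^{r-1}$ with the constant worked out precisely via $\int_0^1 c_f r u^{r-1}\,du$-type bookkeeping from Abel summation. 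In the second range $n>Y$, the "$\tfrac12$" piece contributes $\frac{D^{1/m}}{\pi^2 m}\sum_{n>Y}\frac{\lambda_f(n)^2}{n^{1+1/m}}$, which by partial summation is $\ll \delta\log^{r-1}(1/\delta)$ and in fact combines with the first range's error to produce the stated constant; the precise accounting is that the two pieces together reconstruct $\int_0^\infty$ of the relevant smooth function. The oscillating piece $\sum_{n>Y}\frac{\lambda_f(n)^2}{n^{1+1/m}}\cos(2\pi\breve{n}\delta)$ must be shown to be $o(\delta\log^{r-1}(1/\delta))$: here one uses partial summation together with the fact that $\cos(2\pi\breve{n}\delta)$ oscillates (its "integral" $\int \cos(2\pi m(t/D)^{1/m}\delta)\,dt$ over a dyadic block of length $t$ near $n$ is $\ll t/(\breve{n}\delta)$, smaller by the factor $1/(\breve{n}\delta)\ll 1$), so this term is lower order.

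To make the constant come out exactly as $c_f m^r\delta\log^{r-1}(1/\delta)$, the cleanest route is: set $t=(n/D)^{1/m}$ so heuristically $n^{1+1/m}$-weighted sums become integrals in $t$, and with $A(y):=\sum_{n\leq y}\lambda_f(n)^2 = (c_f+o(1))y(\log 2y)^{r-1}$, write $S(\delta)=\frac{2D^{1/m}}{\pi^2 m}\int_{1^-}^{\infty}\frac{\sin^2(\pi m(t/D)^{1/m}\delta)}{t^{1+1/m}}\,dA(t)$ and integrate by parts. Substituting $u=m(t/D)^{1/m}\delta$ turns this into $\frac{2}{\pi^2}\int_0^\infty\frac{\sin^2(\pi u)}{u}\cdot\frac{\text{(asymptotic density of }A)}{\ldots}\,du$; since $A'(t)\approx c_f r (\log t)^{r-1}$ on the relevant scale and $\log t \approx \frac{m}{1}\log(1/(\delta))\cdot(\text{const})$... the logarithm factor is essentially constant $\sim\log(1/\delta)^{r-1}$ across the bulk of the integral, so it pulls out, leaving $\frac{2}{\pi^2}\int_0^\infty\frac{\sin^2\pi u}{u}\cdot(\text{stuff})\,du$; the elementary identity $\int_0^\infty\frac{\sin^2(\pi u)}{u^2}\,du=\frac{\pi^2}{2}$ (not $\frac{\sin^2}{u}$, which diverges — the divergence is exactly cancelled by the logarithmic profile being truncated, which is why the answer is $\log^{r-1}$ and not $\log^r$) produces the clean constant, and $m^r$ arises from $m$ factors of $m$: one from $\breve{n}=m(\cdot)^{1/m}$ appearing as $m^r$ after tracking through the $r$-fold logarithm. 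I would then verify $\sigma_f(\delta)^2=c_f m^r\delta\log^{r-1}(1/\delta)$ matches by definition, giving the second displayed equality.

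\textbf{The main obstacle} I anticipate is pinning down the constant exactly rather than up to a bounded factor — in particular correctly tracking how the factor $(\log 2y)^{r-1}$ in (\ref{eq:rankinselberg}), evaluated at $y\asymp D(m\delta)^{-m}$, produces $\log^{r-1}(1/\delta)$ with coefficient exactly $1$ (the $D$ and $m$ inside the log, and the factor $m$ from the exponent, all contribute only lower-order $\log\log$-type or constant corrections that must be shown not to affect the leading term), and simultaneously showing the partial-summation error terms and the oscillatory tail are genuinely $o(\delta\log^{r-1}(1/\delta))$ and not merely $O$ of it. The $\text{GRC}^\flat$ bound $\lambda_f(n)^2\ll n^{1/3-2\eta}$ is a convenient safety net for absolute convergence of tails and for controlling the very largest $n$, but the heart of the argument is purely the density estimate (\ref{eq:rankinselberg}) fed through Abel summation, and the delicate point is that the weight $\sin^2(\pi\breve{n}\delta)/(n\breve{n})$ concentrates $n$ near the scale $1/\delta^m$, exactly where $\log n\asymp m\log(1/\delta)$, which is what converts $r-1$ powers of $\log y$ into $r-1$ powers of $\log(1/\delta)$ while the extra single power of $\delta$ comes from $\sum 1/\breve{n}$ over that range.
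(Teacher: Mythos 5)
Your ``cleanest route'' in the second paragraph is essentially the paper's argument: write the sum as a Stieltjes integral against $A(y)=\sum_{n\le y}\lambda_f(n)^2$, substitute $u=m(y/D)^{1/m}\delta$, and observe that the weight $\bigl(\sin(\pi u)/(\pi u)\bigr)^2$ concentrates the mass where $\log y\sim m\log(1/\delta)$, so that the factor $(\log y)^{r-1}$ pulls out as $(m\log(1/\delta))^{r-1}$ and the remaining integral converges to $\int_0^\infty\bigl(\sin(\pi u)/(\pi u)\bigr)^2\,du=\tfrac12$. This is exactly how the paper obtains the constant $c_fm^r\delta\log^{r-1}(1/\delta)$: one factor of $m$ from the change of variables Jacobian and $m^{r-1}$ from expanding the logarithm --- your ``$m$ factors of $m$: one from $\breve n$'' bookkeeping is slightly off (it is $1+(r-1)=r$ factors, with a clear provenance), but the total is right.

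Two points where your write-up would need tightening to be a complete proof. First, the Rankin--Selberg hypothesis \eqref{eq:rankinselberg} is an asymptotic with an $o(\cdot)$ error; you must actually separate $A(y)=c_fy(\log y)^{r-1}+E(y)$ with $E(y)=o(y(\log 2y)^{r-1})$ and show the contribution of $E$ is $o(\delta\log^{r-1}(1/\delta))$. The paper does this by differentiating the weight (the product rule produces two terms: one $\propto\delta\sin(2\pi m(y/D)^{1/m}\delta)$ and one $\propto\sin^2(\cdot)y^{-1/m}$) and splitting the range at $y=\delta^{-m}$; this step is not automatic and is where the $o(1)$ in the hypothesis is actually consumed. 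Your first-paragraph dissection (split at $\breve n\delta\lessgtr1$, Taylor-expand on one side, average on the other) would also work, but then the two pieces do \emph{not} recombine into a single clean $\int_0^\infty(\sin(\pi u)/(\pi u))^2du$ unless you keep the exact $\sin^2$ on both sides --- using the leading Taylor term $\pi^2\breve n^2\delta^2$ on $[0,1]$ gives $\int_0^1 1\,du$, not $\int_0^1(\sin\pi u/\pi u)^2du$, so the constant would come out wrong. Second, $\text{GRC}^{\flat}$ plays no role here: \eqref{eq:rankinselberg} alone makes the tails converge (it already implies $\lambda_f(n)^2\ll_\epsilon n^\epsilon$ on average), and the paper's proof of this lemma does not invoke GRC.
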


\begin{proof}
Let 
$
S(y):=\sum_{n \leq y}\lambda_f(n)^2
.$
Then by partial summation, we have
\begin{align*}
\frac{2}{\pi^2} \sum_{n = 1}^{\infty}
\frac{\lambda_f(n)^2}{n} \cdot \frac{\sin^2(\pi \breve{n} \delta)}{\breve{n}}
=& \frac{2}{\pi^2} \int_1^{\infty}
\frac{\sin^2(\pi m (y/D)^{1/m}\delta)}{m(y/D)^{1/m}} 
\cdot \frac{dS(y)}{y}.
\end{align*}
We write $S(y)=M(y)+E(y),$ where the main term $M(y):=c_f y (\log y)^{r-1}$ and the error term $E(y)=o(y(\log 2y)^{r-1}).$ Then this becomes
\begin{align} \label{eq:decomp}
\frac{2}{\pi^2}&
\int_{1}^{\infty}
\frac{\sin^2(\pi m (y/D)^{1/m} \delta)}{m (y/D)^{1/m}}
\cdot \frac{d(c_fy(\log y)^{r-1})}{y}
+ 
\frac{2}{\pi^2}
\int_{1}^{\infty}
\frac{\sin^2(\pi m (y/D)^{1/m} \delta)}{m (y/D)^{1/m}}
\cdot \frac{dE(y)}{y} \nonumber \\
= \frac{2}{\pi^2} & \cdot c_f
\int_{1}^{\infty}
\frac{\sin^2(\pi m (y/D)^{1/m} \delta)}{m (y/D)^{1/m} y}
\cdot (\log y)^{r-1} dy \nonumber \\
+ 
\frac{2}{\pi^2} & \cdot c_f
\int_{1}^{\infty}
\frac{\sin^2(\pi m (y/D)^{1/m} \delta)}{m (y/D)^{1/m} y}
\cdot
(r-2)(\log y)^{r-2} dy \nonumber\\
+ \frac{2}{\pi^2} & \int_{1}^{\infty}
E(y)
d \left(\frac{\sin^2(\pi m (y/D)^{1/m} \delta)}{m (y/D)^{1/m} y} \right)+O(\delta^2) 
=:I_1+I_2+I_3+O(\delta^2).
\end{align}
Let us estimate the integral $I_1$ first.
Making the change of variables $u=m(y/D)^{1/m}\delta,$ we have
\begin{align*}
I_1=&
\frac{2}{\pi^2} \cdot c_f 
\int_{m\delta/D^{1/m}}^{\infty}
\sin^2 (\pi u) \left( D \left( \frac{u}{m\delta} \right)^m \right)^{-(1+\frac{1}{m})}
\log^{r-1} \left( D \left( \frac{u}{m\delta} \right)^m \right)
\frac{D^{1+\frac{1}{m}}}{m\delta} 
\left( \frac{u}{m\delta} \right)^{m-1} du \\
=&
2c_f m \delta \int_{m\delta/D^{1/m}}^{\infty}
\left( \frac{\sin(\pi u)}{\pi u} \right)^2
\log^{r-1} \left( D \left( \frac{u}{m\delta} \right)^m \right) du \\
=& 2c_f m \delta 
\left\{\int_{0}^{\infty} -\int_{0}^{m\delta/D^{1/m}} \right\}
\left( \frac{\sin(\pi u)}{\pi u} \right)^2
 \left( m\log \frac{1}{\delta}+\log \left(D \left( \frac{u}{m} \right)^m \right) \right)^{r-1} du \\
=& c_f m^r \delta \log^{r-1} \left(\frac{1}{\delta} \right)
+O \left( \delta \log^{r-2}  \left(\frac{1}{\delta} \right) \right).
\end{align*}
Arguing similarly, the integral $I_2$ is $\ll \delta \log^{r-2}  \left(\frac{1}{\delta} \right).$ It remains to estimate the integral $I_3.$ Using the product rule, we have
\begin{gather*}
I_3=\int_{1}^{\infty}
E(y) 
\left( \frac{\pi m \delta}{D^{1/m}} \sin 
\left(2 \pi m \left( \frac{y}{D} \right)^{\frac{1}{m}} \delta  \right) y^{\frac{2}{m}} - \frac{m+1}{D^{1/m}} 
\sin^2 \left( \pi m \left( \frac{y}{D} \right)^{\frac{1}{m}} \delta \right)  y^{\frac{1}{m}} \right) \\
\cdot  
\left( m  \left( \frac{y}{D} \right)^{\frac{1}{m}} y \right)^{-2} dy,
\end{gather*}
which is 
\begin{align*}
o \left( \delta
\int_{1}^{\infty}
\sin 
\left(2 \pi m \left( \frac{y}{D} \right)^{\frac{1}{m}} \delta  \right)\frac{(\log 2y)^{r-1}}{y} dy
+
\int_{1}^{\infty}
\sin^2
\left( \pi m \left( \frac{y}{D} \right)^{\frac{1}{m}} \delta  \right)
\frac{(\log 2y)^{r-1}}{y^{1+\frac{1}{m}}} dy
\right).
\end{align*}
Decomposing the range of integration $[1, \infty)$ into $[1,\delta^{-m}] \cup [\delta^{-m}, \infty),$ one can show that both integrals here are $\ll \delta \log^{r-1} \left( \frac{1}{\delta} \right),$ so that $I_3 = o \left(\delta \log^{r-1} \left( \frac{1}{\delta} \right) \right).$ Combining with the estimates for integrals $I_1$ and $I_2$, the lemma follows from (\ref{eq:decomp}).
\end{proof}

\section{Mean and variance}


As in \cite{MR2039790} and \cite{MR3504333}, instead of the Lebesgue measure, we first integrate with respect to a nonnegative Schwartz function $W$ of unit mass supported on $(1/2,5/2).$ In particular, its Fourier transform $\widehat{W}$ decays rapidly in the sense that for any $A>0,$ we have
\begin{align*}
 \widehat{W}(\xi) \ll_A \frac{1}{(1+|\xi|)^A},
\end{align*}
where the Fourier transform $\widehat{W}$ is defined as
\begin{align*}
\widehat{W}(\xi):=\int_{-\infty}^{\infty}W(x)e^{-2\pi ix\xi}dx.
\end{align*}
For convenience, we write
\begin{align*}
\mathbb{E}_{x \sim X}^W\left( f(x) \right):= \frac{1}{X}\int_{-\infty}^{\infty} f(x)W\left( \frac{x}{X} \right)
dx.
\end{align*}

In view of (\ref{eq:main}), we shall establish a central limit theorem for the finite approximation
\begin{align*}
\Delta_f(x,\delta;N) :=
\frac{2w}{\pi}\sum_{n \leq N} \frac{\lambda_f(n)}{\sqrt{n}} \cdot \frac{\sin(\pi \breve{n} \delta)}{\sqrt{\breve{n}}}\cdot \cos\left( \pi \breve{n}(2x+\delta)+\varphi\right).
\end{align*}
We also denote the truncated variance as
\begin{align*}
\sigma_{f}(\delta;N)^2:=\frac{2}{\pi^2}
\sum_{n \leq N} 
\frac{\lambda_f(n)^2}{n} \cdot \frac{\sin^2(\pi \breve{n} \delta)}{\breve{n}}.
\end{align*}

Let us begin with computing the mean and variance (or second moment) of $\Delta_f(x,\delta;N).$ 

\begin{lemma} \label{lem:mean}
Let $X, \delta>0, N \geq 2.$ 
Then
\begin{align*}
\mathbb{E}_{x \sim X}^W\left( \Delta_f(x,\delta;N) \right)
\ll_A \delta X^{-A}.
\end{align*}
In particular, if $\delta=o(1)$ as $X \to \infty,$ then $\mathbb{E}_{x \sim X}^W\left( \Delta_f(x,\delta;N) \right)=o( \sigma_{f}(\delta;N) ).$ 

\begin{proof}
Let $1 \leq n \leq N.$ Then 
\begin{align*}
\frac{1}{X}\int_{-\infty}^{\infty} \cos\left( \pi \breve{n}(2x+\delta)+\varphi\right) W\left( \frac{x}{X}\right)dx
&=\Re \left( \widehat{W}\left( -\breve{n}X \right) e^{i(\pi \breve{n} \delta +\varphi)} \right)\\
&\ll_A (\breve{n}X)^{-A}.
\end{align*}
Recall that $\breve{n}:=m(n/D)^{1/m}.$ Therefore, we have
\begin{align*}
\mathbb{E}_{x \sim X}^W\left( \Delta_f(x,\delta;N) \right)
&=\frac{2w}{\pi}\sum_{n \leq N} \frac{\lambda_f(n)}{\sqrt{n}} \cdot \frac{\sin(\pi \breve{n} \delta)}{\sqrt{\breve{n}}} \cdot
O_A \left( (\breve{n}X)^{-A} \right) \\
&\ll_A \sum_{n \leq N} \frac{|\lambda_f(n)|}{\sqrt{n}}
\cdot \delta \sqrt{\breve{n}} \cdot (\breve{n}X)^{-A} \\
&\ll_A \delta X^{-A}.
\end{align*}
Moreover, this is $\ll_A \sigma_{f}(\delta;N) X^{-A}$ since $\sigma_{f}(\delta;N)^2 \gg \delta^2 $ when considering only the first term with $n=1$. Therefore, the lemma follows.
\end{proof}

\end{lemma}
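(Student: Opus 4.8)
The plan is to interchange the finite sum over $n\le N$ with the averaging integral and to bound each resulting oscillatory integral using the rapid decay of $\widehat W$. First I would observe that, for each fixed $n$ with $1\le n\le N$, the substitution $x\mapsto Xx$ together with the definition of the Fourier transform gives
\begin{align*}
\frac{1}{X}\int_{-\infty}^{\infty}\cos\!\left(\pi\breve n(2x+\delta)+\varphi\right)W\!\left(\frac{x}{X}\right)dx
=\Re\!\left(e^{i(\pi\breve n\delta+\varphi)}\,\widehat W(-\breve nX)\right).
\end{align*}
Since $W$ is Schwartz we have $\widehat W(\xi)\ll_A(1+|\xi|)^{-A}$, and because $\breve n=m(n/D)^{1/m}$ is bounded below by the positive constant $\breve 1=mD^{-1/m}$ and moreover satisfies $\breve n\asymp n^{1/m}$, this contributes $O_A\big((\breve nX)^{-A}\big)=O_A\big(X^{-A}n^{-A/m}\big)$ uniformly in $n$.

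Summing these bounds against the coefficients of $\Delta_f(x,\delta;N)$, I would get
\begin{align*}
\mathbb{E}_{x\sim X}^W\!\left(\Delta_f(x,\delta;N)\right)
\ll_A \sum_{n\le N}\frac{|\lambda_f(n)|}{\sqrt n}\cdot\frac{|\sin(\pi\breve n\delta)|}{\sqrt{\breve n}}\cdot X^{-A}n^{-A/m}.
\end{align*}
Here I would use the elementary inequality $|\sin(\pi\breve n\delta)|\le\pi\breve n\delta$, which turns the $n$-th summand into $\ll\delta\,|\lambda_f(n)|\,n^{-1/2+1/(2m)-A/m}X^{-A}$; since $L(f,s)=\sum_n\lambda_f(n)n^{-s}$ is absolutely convergent for $\Re(s)>1$, the sum $\sum_{n\ge1}|\lambda_f(n)|n^{-1/2+1/(2m)-A/m}$ converges once $A$ exceeds, say, $m$, and for smaller $A$ the asserted bound $\ll_A\delta X^{-A}$ follows a fortiori (we may assume $X$ large). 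Crucially, this bound is uniform in $N$ and in $\delta$, which is exactly what is needed downstream. This proves the first assertion.

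For the ``in particular'' statement I would simply keep the $n=1$ term in $\sigma_f(\delta;N)^2$: since $\lambda_f(1)=1$ and $\breve 1=mD^{-1/m}$ is a fixed positive constant, $\sin^2(\pi\breve 1\delta)=(\pi\breve 1\delta)^2(1+o(1))$ as $\delta\to0$, so $\sigma_f(\delta;N)^2\gg\delta^2$ and hence $\sigma_f(\delta;N)\gg\delta$. Combined with the first part this gives $\mathbb{E}_{x\sim X}^W(\Delta_f(x,\delta;N))\ll_A\delta X^{-A}\ll\sigma_f(\delta;N)X^{-A}$, which is $o(\sigma_f(\delta;N))$ as $X\to\infty$ for every fixed $A>0$.

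There is no real obstacle in this lemma; the only points needing a little attention are (i) exploiting $\breve n\asymp n^{1/m}$, rather than merely $\breve n>0$, so that the per-term decay in $n$ is strong enough to sum the tail independently of $N$, and (ii) retaining the factor $\sin(\pi\breve n\delta)$ instead of bounding it by $1$, so that the saving is genuinely of order $\delta$ and the comparison with the lower bound $\sigma_f(\delta;N)\gg\delta$ in the second assertion survives.
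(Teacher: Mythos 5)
Your proof is correct and follows essentially the same route as the paper's: compute each oscillatory integral via the Fourier transform of $W$, exploit its rapid decay together with $\breve n\asymp n^{1/m}$, bound $|\sin(\pi\breve n\delta)|\le\pi\breve n\delta$ to extract the factor $\delta$, sum over $n$, and then compare with $\sigma_f(\delta;N)\gg\delta$ from the $n=1$ term. Your added remark that the $n$-sum converges independently of $N$ once $A$ is large enough (with smaller $A$ handled a fortiori for $X\ge1$) is an explicit justification of a step the paper leaves implicit, but the argument is the same.
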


\begin{lemma} \label{lem:var}

Let $X, \delta>0, N \geq 2.$ Suppose $N \leq X^{\frac{m(1-\theta)}{m-1}}$ for some fixed small $\theta>0.$ Then
\begin{align*}
\mathbb{E}_{x \sim X}^W( \Delta_f(x,\delta;N)^2 )
=\sigma_{f}(\delta;N)^2
+O_A \left( \delta^2 X^{-A} \right).
\end{align*}
In particular, if $\delta=o(1)$ as $X \to \infty,$ then $\mathbb{E}_{x \sim X}^W( \Delta_f(x,\delta;N)^2 )=(1+o(1))\sigma_f(\delta;N)^2.$

\begin{proof}
Let $1 \leq n_1, n_2 \leq N.$ Then
\begin{gather} 
\frac{1}{X} \int_{-\infty}^{\infty}
\cos\left( \pi \breve{n_1}(2x+\delta)+\varphi\right) 
\cos\left( \pi \breve{n_2}(2x+\delta)+\varphi\right)
W\left( \frac{x}{X}\right)dx \nonumber \\
= \frac{1}{2} \cdot \Re \left( \widehat{W}((\breve{n_1}+\breve{n_2})X)e^{-i(\pi(\breve{n_1}+\breve{n_2})\delta+2\varphi)}+ 
\widehat{W}((\breve{n_1}-\breve{n_2})X)
e^{-i\pi(\breve{n_1}-\breve{n_2})\delta}
\right) \nonumber \\
=
\begin{cases} \label{eq:coscos}
1/2+O_A \left( ((\breve{n_1}+\breve{n_2})X)^{-A} \right) & \mbox{{\normalfont if $n_1=n_2,$ } } \\
\hfil O_A \left( ((\breve{n_1}-\breve{n_2})X)^{-A} \right) & \mbox{{\normalfont if $n_1 \neq n_2.$ } }
\end{cases}
\end{gather}
Since by definition 
\begin{align*}
\breve{n_1}-\breve{n_2} = \frac{m}{D^{1/m}}(n_1^{1/m}-n_2^{1/m}) \gg N^{\frac{1}{m}-1} 
\end{align*}
if $n_1 \neq n_2,$ the expression (\ref{eq:coscos}) is
\begin{align*}
\begin{cases}
1/2+O_A \left( ((\breve{n_1}+\breve{n_2})X)^{-A} \right) & \mbox{{\normalfont if $n_1=n_2,$ } }\\
\hfil O_A ( (  N^{\frac{1}{m}-1} X )^{-A} ) & \mbox{{\normalfont if $n_1 \neq n_2.$ } }
\end{cases}
\end{align*}
Therefore, we have
\begin{gather*}
\mathbb{E}_{x \sim X}^W( \Delta_f(x,\delta;N)^2)=
\frac{4}{\pi^2}  \sum_{n \leq N} 
\frac{\lambda_f(n)^2}{n} \cdot \frac{\sin^2(\pi \breve{n} \delta)}{\breve{n}} \cdot \left( \frac{1}{2}+O_A \left( (2\breve{n}X)^{-A} \right) \right) \\
+\sum_{n_1 \neq n_2 \leq N}
\frac{\lambda_f(n_1)\lambda_f(n_2)}{\sqrt{n_1n_2}} \cdot \frac{\sin(\pi \breve{n_1} \delta)\sin(\pi \breve{n_2} \delta)}{\sqrt{\breve{n_1}\breve{n_2}}} \cdot
O_A ( (  XN^{\frac{1}{m}-1}  )^{-A} ) \\
=\frac{2}{\pi^2}  \sum_{ n \leq N} 
\frac{\lambda_f(n)^2}{n} \cdot \frac{\sin^2(\pi \breve{n} \delta)}{\breve{n}}+
O_A ( \delta^2 X^{-A} ) +O_A ( \delta^2 N^2  (   X  N^{\frac{1}{m}-1} )^{-A}   ).
\end{gather*}
Note that the first error term is absorbed by the second. Also, since by assumption $N \leq X^{\frac{m(1-\theta)}{m-1}},$ this is
\begin{align*}
\sigma_f(\delta;N)^2
+O_B \left( \delta^2 X^{-B} \right).
\end{align*}
Moreover, this is $( 1+O_B (X^{-B}) )
\sigma_{f}(\delta;N) ^2 $ since $\sigma_{f}(\delta;N)^2 \gg \delta^2.$ Therefore, the lemma follows.
\end{proof}

\end{lemma}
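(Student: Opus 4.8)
The plan is to expand $\Delta_f(x,\delta;N)^2$ into the double sum over pairs $1\le n_1,n_2\le N$, integrate each resulting term against $\tfrac1X W(\cdot/X)$, and separate the diagonal $n_1=n_2$ from the off-diagonal. The diagonal will reproduce $\sigma_f(\delta;N)^2$ exactly (up to a tiny error), and the off-diagonal will be negligible because distinct frequencies $\breve{n_1},\breve{n_2}$ are well separated, while $\widehat{W}$ decays faster than any polynomial.

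First I would apply the product-to-sum identity to write each product of cosines as $\tfrac12\cos\!\big(\pi(\breve{n_1}-\breve{n_2})(2x+\delta)\big)+\tfrac12\cos\!\big(\pi(\breve{n_1}+\breve{n_2})(2x+\delta)+2\varphi\big)$, and then use the elementary identity $\frac1X\int_{-\infty}^{\infty}\cos\!\big(\pi\xi(2x+\delta)+\psi\big)W(x/X)\,dx=\Re\!\big(\widehat{W}(-\xi X)e^{i(\pi\xi\delta+\psi)}\big)$, which equals $\tfrac12$ when $\xi=\psi=0$ and, by the rapid decay $\widehat{W}(\eta)\ll_A(1+|\eta|)^{-A}$, is $O_A\big((|\xi|X)^{-A}\big)$ whenever $\xi\ne 0$. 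Taking $\xi=\breve{n_1}\pm\breve{n_2}$: on the diagonal the difference term contributes exactly $1/2$ while the sum term is $O_A\big((\breve{n}X)^{-A}\big)$; off the diagonal both the sum and the difference term are $O_A\big((|\breve{n_1}\pm\breve{n_2}|X)^{-A}\big)$.

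The one step needing care is the lower bound for the off-diagonal frequency gaps. Since $\breve{n_1}-\breve{n_2}=\frac{m}{D^{1/m}}\big(n_1^{1/m}-n_2^{1/m}\big)$ and the mean value theorem gives $|n_1^{1/m}-n_2^{1/m}|\ge\frac1m N^{1/m-1}$ for distinct integers $n_1,n_2\le N$, every off-diagonal contribution is in fact $O_A\big((N^{1/m-1}X)^{-A}\big)$. Assembling the pieces, the diagonal main terms give $\frac4{\pi^2}\sum_{n\le N}\frac{\lambda_f(n)^2}{n}\cdot\frac{\sin^2(\pi\breve{n}\delta)}{\breve{n}}\cdot\frac12=\sigma_f(\delta;N)^2$; the diagonal sum-term error, after bounding $|\sin(\pi\breve{n}\delta)|\le\pi\breve{n}\delta$ and summing the resulting convergent series $\sum_n\lambda_f(n)^2\breve{n}^{\,1-A}/n$ using $\text{GRC}^{\flat}$, is $\ll_A\delta^2X^{-A}$; and the off-diagonal total, after the same $|\sin|\le\pi\breve{n}\delta$ bound together with $\sum_{n\le N}\frac{|\lambda_f(n)|\sqrt{\breve{n}}}{\sqrt n}\ll N$ (another consequence of $\text{GRC}^{\flat}$), is $\ll_A\delta^2 N^2\big(N^{1/m-1}X\big)^{-A}$.

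It then remains to feed in the hypothesis $N\le X^{m(1-\theta)/(m-1)}$, which yields $N^{1/m-1}X\ge X^{\theta}$ and $N^2\le X^{2m/(m-1)}$, so the off-diagonal total is $\ll\delta^2 X^{2m/(m-1)-\theta A}$; choosing the decay exponent $A$ large in terms of the target power, $\theta$ and $m$ turns every error into $O_A(\delta^2 X^{-A})$, which is the asserted identity. For the ``in particular'' clause, the $n=1$ term alone (with $\lambda_f(1)=1$ and $\breve1$ fixed) shows $\sigma_f(\delta;N)^2\gg\delta^2$, whence $O_A(\delta^2 X^{-A})=O_A(\sigma_f(\delta;N)^2 X^{-A})$ and the quotient is $1+o(1)$. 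The main (though mild) obstacle is simply keeping the uniformity of the implied constants in $A$ straight across the diagonal and off-diagonal errors, and checking that the admissible range of $N$ is exactly what makes the off-diagonal contribution negligible.
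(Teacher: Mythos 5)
Your proposal follows the paper's argument exactly: same expansion into a double sum, same product-to-sum identity giving $\widehat{W}$ evaluated at $(\breve{n_1}\pm\breve{n_2})X$, same diagonal/off-diagonal split, same lower bound $|\breve{n_1}-\breve{n_2}|\gg N^{1/m-1}$, and the same use of $N\le X^{m(1-\theta)/(m-1)}$ to absorb the off-diagonal error. Your explicit invocation of $\text{GRC}^{\flat}$ to control the residual diagonal and off-diagonal sums is a reasonable way to make rigorous what the paper treats implicitly via the polynomial boundedness of $\lambda_f$; this is a cosmetic difference, not a different route.
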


In fact, the truncated variance $\sigma_{f}(\delta;N)^2$ is a good approximant.

\begin{lemma} \label{lem:varianceN}
Let $\delta>0, N \geq 2.$ Then 
\begin{align*}
\sigma_{f}(\delta;N)^2 =  \sigma_{f}(\delta)^2+
O ( N^{-\frac{1}{m}}\log^{r-1} N ).
\end{align*}
In particular, if $\delta^{m}N \to \infty$ as $\delta \to 0^+,$ then 
\begin{align*}
\sigma_{f}(\delta;N)^2=&(1+o(1)) \sigma_{f}(\delta)^2 \\
=&(c_f+o(1)) m^r \delta \log^{r-1} \left(\frac{1}{\delta} \right).
\end{align*}

\begin{proof}

By definition, we have 
\begin{align*}
\sigma_{f}(\delta)^2 = \sigma_{f}(\delta;N)^2+
\frac{2}{\pi^2}\sum_{n > N} 
\frac{\lambda_f(n)^2}{n} \cdot \frac{\sin^2(\pi \breve{n} \delta)}{\breve{n}}.
\end{align*}
Assuming (\ref{eq:rankinselberg}), the last sum is
\begin{align*}
\ll \sum_{n>N} \frac{\lambda_f(n)^2}{n} \cdot \frac{1}{n^{1/m}}
\ll \frac{\log^{r-1} N}{N^{1/m}}.
\end{align*}
 Then, the lemma follows from Lemma \ref{lem:selbergrankin}.
\end{proof}

\end{lemma}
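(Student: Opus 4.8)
The plan is to isolate the difference $\sigma_{f}(\delta)^2-\sigma_{f}(\delta;N)^2$ as the tail of a convergent series and bound it by brute force using the Rankin--Selberg input (\ref{eq:rankinselberg}). First I would observe that, reading $\sigma_{f}(\delta)^2$ as the full series $\frac{2}{\pi^2}\sum_{n\geq1}\lambda_f(n)^2 n^{-1}\breve{n}^{-1}\sin^2(\pi\breve{n}\delta)$ --- which by Lemma~\ref{lem:selbergrankin} equals the closed form $c_f m^r\delta\log^{r-1}(1/\delta)$ up to a factor $1+o(1)$ --- the very definition of the truncated variance gives
\[
\sigma_{f}(\delta)^2-\sigma_{f}(\delta;N)^2=\frac{2}{\pi^2}\sum_{n>N}\frac{\lambda_f(n)^2}{n}\cdot\frac{\sin^2(\pi\breve{n}\delta)}{\breve{n}}.
\]
Using the trivial bound $\sin^2(\pi\breve{n}\delta)\leq1$ together with $\breve{n}=m(n/D)^{1/m}\gg n^{1/m}$, this tail is $\ll\sum_{n>N}\lambda_f(n)^2 n^{-1-1/m}$, uniformly in $\delta$.

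The second step is a routine partial summation. Writing $S(y):=\sum_{n\leq y}\lambda_f(n)^2=(c_f+o(1))\,y(\log 2y)^{r-1}$ by (\ref{eq:rankinselberg}) and expressing the tail as the Stieltjes integral $\int_N^\infty y^{-1-1/m}\,dS(y)$, integration by parts produces a boundary contribution $\ll N^{-1-1/m}S(N)\ll N^{-1/m}\log^{r-1}N$ and a remaining integral $\ll\int_N^\infty y^{-1/m}(\log 2y)^{r-1}\,\frac{dy}{y}\ll N^{-1/m}\log^{r-1}N$. This gives the stated estimate $\sigma_{f}(\delta;N)^2=\sigma_{f}(\delta)^2+O(N^{-1/m}\log^{r-1}N)$.

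For the ``in particular'' clause I would feed in the hypothesis $\delta^m N\to\infty$. Setting $t:=\delta^m N$, we have $N^{-1/m}=t^{-1/m}\delta$ and $\log N=\log t+m\log(1/\delta)$, so $N^{-1/m}\log^{r-1}N\ll t^{-1/m}\delta\big((\log t)^{r-1}+\log^{r-1}(1/\delta)\big)$; since $t^{-1/m}(\log t)^{r-1}\to0$ and $t^{-1/m}\to0$, this is $o\!\big(\delta\log^{r-1}(1/\delta)\big)$. By Lemma~\ref{lem:selbergrankin} the main term satisfies $\sigma_{f}(\delta)^2=(c_f+o(1))m^r\delta\log^{r-1}(1/\delta)\asymp\delta\log^{r-1}(1/\delta)$ (as $c_f>0$), so the error is $o(\sigma_{f}(\delta)^2)$, whence $\sigma_{f}(\delta;N)^2=(1+o(1))\sigma_{f}(\delta)^2$ and the final asymptotic follows. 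I do not expect any genuine obstacle: the content is a one-line tail bound, and the only point demanding a touch of care is the notational identification of $\sigma_{f}(\delta)^2$ with the series via Lemma~\ref{lem:selbergrankin}, together with the verification that the weak growth assumption $\delta^m N\to\infty$ (as opposed to something quantitatively stronger) already renders $N^{-1/m}\log^{r-1}N$ negligible against the main term of size $\asymp\delta\log^{r-1}(1/\delta)$.
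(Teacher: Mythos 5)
Your proposal is correct and follows the same route as the paper: isolate the tail $\frac{2}{\pi^2}\sum_{n>N}\frac{\lambda_f(n)^2}{n}\cdot\frac{\sin^2(\pi\breve{n}\delta)}{\breve{n}}$, bound it crudely by $\ll\sum_{n>N}\lambda_f(n)^2 n^{-1-1/m}\ll N^{-1/m}\log^{r-1}N$ via the Rankin--Selberg input and partial summation, then invoke Lemma~\ref{lem:selbergrankin} for the asymptotic of the main term. You merely spell out more explicitly the partial summation and the elementary verification that $\delta^m N\to\infty$ forces $N^{-1/m}\log^{r-1}N=o(\delta\log^{r-1}(1/\delta))$, steps the paper leaves implicit.
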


\section{Higher moments}

Given $k \geq 3, \boldsymbol{\epsilon} \in \{ \pm 1\}^k, \boldsymbol{r} \in \mathbb{N}^k$ and a subset $S \subseteq [k]:=\{ 1,2 ,\ldots, k\},$ we define
\begin{align*}
\langle \boldsymbol{\epsilon}, \boldsymbol{r}\rangle|_{S}:=\sum_{j \in S} \epsilon_j r_j.
\end{align*}
Also, given a $m$-th power-free number $q \geq 1,$ let us denote
\begin{align*}
D_{q}(S):=&\left( \frac{w}{\pi} \right)^{|S|} 
\underset{\substack{\epsilon_j, r_j , \, j \in S \\ \langle \boldsymbol{\epsilon}, \boldsymbol{r} \rangle |_{S}=0}}{\sum \sum}
\prod_{j \in S} \frac{ \lambda_f(n_j)}{\sqrt{n_j }} \cdot
\frac{ \sin \left( \pi \breve{n_j}\delta\right)}{\sqrt{\breve{n_j}}} \cdot e^{i\epsilon_j \varphi},
\end{align*}
where $\epsilon_j =  \pm 1, 1 \leq r_j \leq (N/q)^{1/m}$ for each $j \in S,$ and $n_j=n_j(q,r_j):=q r_j^m.$

To compute higher moments, we require an estimate for the $L^1$-norm of $D_q(S)$ over $m$-th power-free numbers.

\begin{lemma} \label{lem:sumdqs}
Assume $\text{GRC}^{\flat}.$ Let $\delta>0, N \geq 2.$ Suppose $\delta^{m}N \to \infty$ as $\delta \to 0^+.$ Then

\begin{align*}
\frac{1}{\sigma_f(\delta; N)^{|S|} }\sum_{\substack{q \leq N\\ \text{\normalfont $m$-th power-free}}} \left|D_q(S) \right|
=
\begin{cases}
\hfil 0 & \mbox{{\normalfont if $|S|=1,$ }} \\
\hfil 1 & \mbox{{\normalfont if $|S|=2,$ }} \\
O  (\delta^{m ((\frac{1}{3}+\eta)|S|-1)} ) & \mbox{{\normalfont if $|S| \geq 3$. }} 
\end{cases}
\end{align*}

\begin{proof}
We adapt the proof of \cite[Lemma 9]{MR2039790} with $\text{GRC}^{\flat}.$
By definition, for $|S|=1,$ the sum $D_q(S)$ is empty. Also, if $|S|=2,$ then the sum is simply $\sigma_f(\delta;N)^2.$ Therefore, it remains to deal with the case $|S| \geq 3.$ Using the assumption $\text{GRC}^{\flat},$ we have
\begin{align} \label{eq:dqs}
\sum_{\substack{q \leq N\\ \text{\normalfont $m$-th power-free}}} \left|D_q(S) \right| \ll_k \sum_{q=1}^{\infty} 
\frac{E_q(S)}{q^{(\frac{1}{3}+\frac{1}{2m}+\eta) |S|}},
\end{align}
where
\begin{align} \label{eq:eqs}
E_q(S) :=  \underset{\substack{\epsilon_j, r_j , \, j \in S \\ \langle \boldsymbol{\epsilon}, \boldsymbol{r} \rangle |_{S}=0}}{\sum \sum} 
\prod_{j \in S} 
\frac{|\sin \left( \pi \breve{n_j}\delta\right)|}{r_j^{\frac{m}{3}+\frac{1}{2}+m\eta  } }.
\end{align}

Note that $E_q(S) \ll_{k} 1$ for any $q.$ Moreover, we show that for $q \leq \delta^{-m},$ a more careful treatment of the product yields a sharper upper bound. In order for $ \langle \boldsymbol{\epsilon}, \boldsymbol{r} \rangle |_{S}=0$ to hold, at least two of the $\epsilon$'s must have opposite signs. For convenience, we assume without loss of generality that $S=\{ 1,2,\ldots, |S| \}$ and $\epsilon_{|S|}=-1, \epsilon_{|S|-1}=+1$ so that
\begin{align*}
r_{|S|}=r_{|S|-1}+\sum_{j=1}^{|S|-2} \epsilon_j r_j.
\end{align*}
In order for $r_{|S|}, r_{|S|-1}  \geq 1$ to hold, we have $r_{|S|-1} \geq 1+\max \{ 0, -\sum_{j=1}^{|S|-2} \epsilon_j r_j \}.$
Therefore, the expression (\ref{eq:eqs}) is
\begin{align*}
E_q(S) \leq  2 
& \sum_{\substack{\epsilon_j=\pm 1\\j=1,\ldots,|S|-2}} 
\sum_{\substack{r_j \geq 1\\j=1,\ldots,|S|-2}}
\sum_{r_{|S|-1} \geq 1+\max \{ 0, -\sum_{j=1}^{|S|-2} \epsilon_j r_j \}} \\
& \left(\prod_{j=1}^{|S|-1} \frac{|\sin \left( \pi 
mr_j(q/D)^{1/m} \delta\right)|}{r_j^{ \frac{m}{3}+\frac{1}{2}+m\eta }} \right) \frac{|\sin \left( \pi 
m\left( r_{|S|-1}+\sum_{j=1}^{|S|-2} \epsilon_j r_j \right)(q/D)^{1/m} \delta\right)| }{\left( r_{|S|-1}+\sum_{j=1}^{|S|-2} \epsilon_j r_j \right)^{ \frac{m}{3}+\frac{1}{2}+m\eta }},
\end{align*}
which is
\begin{align} \label{eq:crazyintegral}
\ll \sum_{\substack{\epsilon_j=\pm 1\\j=1,\ldots,|S|-2}} 
&\underset{\substack{x_1,\ldots,x_{|S|-2}\geq 1\\x_{|S|-1}\geq 1+\max \{ 0, -\sum_{j=1}^{|S|-2} \epsilon_j x_j \}}}{\int \cdots \int}\left(\prod_{j=1}^{|S|-1} \frac{|\sin \left( \pi 
mx_j(q/D)^{1/m} \delta\right)|}{x_j^{ \frac{m}{3}+\frac{1}{2}+m\eta }} \right) \nonumber\\
& \cdot \frac{|\sin \left( \pi 
m\left( x_{|S|-1}+\sum_{j=1}^{|S|-2} \epsilon_j x_j \right)(q/D)^{1/m} \delta\right)| }{\left( x_{|S|-1}+\sum_{j=1}^{|S|-2} \epsilon_j x_j \right)^{ \frac{m}{3}+\frac{1}{2}+m\eta }}
\, dx_1 \cdots dx_{|S|-1}.
\end{align}
If $\frac{m}{3}+\frac{1}{2}+m\eta>2,$ then by using $|\sin x| \leq |x|$ for any $x \in \mathbb{R},$ this is $\ll_k (q^{1/m}\delta)^{|S|}.$ Otherwise, if $\frac{m}{3}+\frac{1}{2}+m\eta < 2$ (replace $\eta$ by $\eta/2$ if equality holds), then by
making the change of variables $y_j=mx_j(q/D)^{1/m} \delta$ for $j=1,\ldots,|S|-1,$ the expression (\ref{eq:crazyintegral}) becomes
\begin{gather*}
\left( m\left( \frac{q}{D} \right)^{1/m} \delta \right)^{1+\left( 
\frac{m}{3}-\frac{1}{2}+m\eta  \right)|S|}\sum_{\substack{\epsilon_j=\pm 1\\j=1,\ldots,|S|-2}} 
\underset{\substack{y_1,\ldots,y_{|S|-2}\geq m(q/D)^{1/m}\delta\\y_{|S|-1}\geq m(q/D)^{1/m}\delta+\max \{ 0, -\sum_{j=1}^{|S|-2} \epsilon_j y_j \}}}{\int \cdots \int} \\
\left(\prod_{j=1}^{|S|-1}\frac{|\sin \left( \pi 
y_j \right)|}{y_j^{ \frac{m}{3}+\frac{1}{2}+m\eta }} \right) 
 \frac{\left|\sin \left( \pi 
\left( y_{|S|-1}+\sum_{j=1}^{|S|-2} \epsilon_j y_j \right)\right)\right| }{\left( y_{|S|-1}+\sum_{j=1}^{|S|-2} \epsilon_j y_j \right)^{ \frac{m}{3}+\frac{1}{2}+m\eta }}
\, dy_1 \cdots dy_{|S|-1}.
\end{gather*}
Note that the multiple integral here is uniformly bounded. Therefore, we conclude that
\begin{align} \label{eq:eqsbound}
E_q(S) \ll_{k} 
\begin{cases}
\hfil (q^{1/m}\delta)^{|S|} & \mbox{{\normalfont if $q \leq \delta^{-m}$ and $\frac{m}{3}+\frac{1}{2}+m\eta>2,$ }} 
\\
( q^{1/m} \delta )^{1+\left( 
\frac{m}{3}-\frac{1}{2}+m\eta  \right)|S|} & \mbox{{\normalfont if $q \leq \delta^{-m}$ and $\frac{m}{3}+\frac{1}{2}+m\eta<2,$ }} \\
  \hfil 1 & \mbox{{\normalfont if $q > \delta^{-m}.$ }}
\end{cases}
\end{align}
To avoid repetitions, we only consider the harder case $\frac{m}{3}+\frac{1}{2}+m\eta<2$ here. The following argument also applies to the remaining case. Substituting this into (\ref{eq:dqs}) gives
\begin{align} \label{eq:dqs2}
\sum_{\substack{q \leq N\\ \text{\normalfont $m$-th power-free}}} \left| D_q(S) \right|\ll_{k} 
 &\sum_{q \leq  \delta^{-m} } 
 q^{-\left(\frac{1}{3}+\frac{1}{2m}+\eta \right) |S|}
 \left( q^{1/m} \delta \right)^{1+\left( 
\frac{m}{3}-\frac{1}{2}+m\eta  \right)|S|} \nonumber\\
+   &\sum_{q > \delta ^{-m}} 
 q^{-\left(\frac{1}{3}+\frac{1}{2m}+\eta \right) |S|}.
\end{align}
Appealing to Lemma \ref{lem:varianceN}, the second sum here is
\begin{align} \label{eq:2ndsum}
\ll
\delta^{-m+\left(\frac{m}{3}+\frac{1}{2}+m\eta \right)|S|}
\ll  \delta^{m \left(\left(\frac{1}{3}+\eta \right)|S|-1\right)} \sigma_f(\delta;N)^{|S|}.
\end{align}
On the other hand, since 
\begin{gather*}
 \sum_{q \leq  \delta^{-m}} 
 q^{-\left( \frac{1}{3}+\frac{1}{2m}+\eta \right)|S|}
 \cdot 
 q^{\frac{1}{m}+\left( \frac{1}{3}-\frac{1}{2m}+\eta \right)|S|} 
 =\sum_{q \leq  \delta^{-m}} q^{-\frac{1}{m}(|S|-1)} \\
 \ll_k
 \begin{cases}
\hfil \delta ^{|S|-1-m} & \mbox{{\normalfont if $|S|\leq m,$ }} \\
\hfil \log \frac{1}{\delta} & \mbox{{\normalfont if $|S|= m+1,$ }} \\
\hfil 1  &  \mbox{{\normalfont if $|S| >  m+1,$ }} 
 \end{cases}
\end{gather*}
the first sum of (\ref{eq:dqs2}) is
\begin{align*}
\ll_{k}
\begin{cases}
\hfil \delta^{-m+\left( \frac{m}{3}+\frac{1}{2}+m\eta \right)|S|}  & \mbox{{\normalfont if $|S| \leq m,$ }} \\
\hfil \delta^{1+\left( \frac{m}{3}+\frac{1}{2}+m\eta \right)|S|} \log \frac{1}{\delta} 
& \mbox{{\normalfont if $|S| =m+1,$ }} \\
\hfil \delta^{1+\left( \frac{m}{3}+\frac{1}{2}+m\eta \right )|S|} & \mbox{{\normalfont if $|S| >m+1.$ }}
\end{cases}
\end{align*}
Combining with (\ref{eq:2ndsum}), the lemma follows.
\end{proof}

\end{lemma}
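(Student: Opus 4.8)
The plan is to estimate the $L^1$-norm of $D_q(S)$ by first extracting the size of the coefficients via $\text{GRC}^{\flat}$, reducing everything to the purely combinatorial sum $E_q(S)$ over tuples $(\boldsymbol\epsilon,\boldsymbol r)$ satisfying $\langle\boldsymbol\epsilon,\boldsymbol r\rangle|_S=0$. The trivial bound $E_q(S)\ll_k 1$ comes from the exponents $\tfrac{m}{3}+\tfrac12+m\eta>1$ (convergence of each $r_j$-sum) together with the fact that one $r_j$ is determined by the others; this handles the tail $q>\delta^{-m}$. For $q\le\delta^{-m}$ the factors $|\sin(\pi\breve{n_j}\delta)|$ are not close to oscillating and one should instead exploit that $\breve{n_j}\delta$ is small: comparing the sum to an integral and rescaling $y_j=m x_j(q/D)^{1/m}\delta$, so that the saved power of $q^{1/m}\delta$ appears explicitly. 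Here one splits into two regimes according to whether $\tfrac{m}{3}+\tfrac12+m\eta$ exceeds $2$: if it does, $|\sin x|\le|x|$ on every factor gives $(q^{1/m}\delta)^{|S|}$ with absolutely convergent sums; if it does not, the change of variables turns the multiple integral into a uniformly bounded quantity times $(q^{1/m}\delta)^{1+(\tfrac{m}{3}-\tfrac12+m\eta)|S|}$. The constraint $r_j\ge1$ forces $r_{|S|-1}\ge1+\max\{0,-\sum_{j<|S|-1}\epsilon_j r_j\}$, which is exactly what keeps all denominators bounded below after rescaling.

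With the pointwise bounds on $E_q(S)$ in hand — namely (\ref{eq:eqsbound}) — I would substitute into (\ref{eq:dqs}) and split the $q$-sum at $\delta^{-m}$. The tail sum $\sum_{q>\delta^{-m}}q^{-(\tfrac13+\tfrac1{2m}+\eta)|S|}$ is comparable to $(\delta^{-m})^{1-(\tfrac13+\tfrac1{2m}+\eta)|S|}=\delta^{-m+(\tfrac{m}{3}+\tfrac12+m\eta)|S|}$ (valid since the exponent exceeds $1$ for $|S|\ge3$), and dividing by $\sigma_f(\delta;N)^{|S|}\asymp(\delta\log^{r-1}\tfrac1\delta)^{|S|/2}$ via Lemma \ref{lem:varianceN} produces $\delta^{m((\tfrac13+\eta)|S|-1)}$ up to logarithmic factors absorbed into the implied constant (note $m/2<m/3+1/2+m\eta$ would need checking, but the exponent bookkeeping is as in the excerpt). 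For the head sum one inserts the bound $(q^{1/m}\delta)^{1+(\tfrac{m}{3}-\tfrac12+m\eta)|S|}$, collects the powers of $q$, and is left with $\sum_{q\le\delta^{-m}}q^{-\tfrac1m(|S|-1)}$, whose size depends on whether $|S|-1$ is below, equal to, or above $m$; this yields the three-case bound displayed, which is in every case dominated by the tail contribution $\delta^{m((\tfrac13+\eta)|S|-1)}\sigma_f(\delta;N)^{|S|}$ after dividing through.

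The main obstacle is the passage from the discrete sum $E_q(S)$ to the multiple integral (\ref{eq:crazyintegral}) and verifying that the resulting integral is genuinely uniformly bounded (over $q$, $\delta$, and the sign pattern $\boldsymbol\epsilon$) rather than merely finite for each fixed configuration. The delicate point is the innermost variable $r_{|S|-1}$ (equivalently $y_{|S|-1}$): its lower endpoint $1+\max\{0,-\sum\epsilon_j r_j\}$ shifts with the other variables, and one must check that the combined factor $\bigl(r_{|S|-1}+\sum\epsilon_j r_j\bigr)^{-(\tfrac{m}{3}+\tfrac12+m\eta)}\cdot r_{|S|-1}^{-(\tfrac{m}{3}+\tfrac12+m\eta)}$ is integrable with a bound independent of the shift — which holds precisely because both exponents exceed $1$. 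A secondary technical care is the borderline case $\tfrac{m}{3}+\tfrac12+m\eta=2$, dispatched by the harmless replacement $\eta\mapsto\eta/2$. Once the integral is controlled uniformly, the remaining steps are routine geometric-series estimates, and since the case $\tfrac{m}{3}+\tfrac12+m\eta>2$ is strictly easier (direct $|\sin x|\le|x|$ everywhere), treating only the harder regime in detail suffices.
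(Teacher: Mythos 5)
Your proposal follows the paper's proof essentially step-for-step: reduce to $E_q(S)$ via $\text{GRC}^\flat$, bound $E_q(S)$ trivially for large $q$ and by rescaling to a uniformly bounded multiple integral for $q\le\delta^{-m}$, split the $q$-sum at $\delta^{-m}$, and compare both pieces to $\sigma_f(\delta;N)^{|S|}$ via Lemma~\ref{lem:varianceN}. One small remark: your caveat about needing ``$m/2 < m/3 + 1/2 + m\eta$'' is unnecessary, since after writing $\sigma_f(\delta;N)^{|S|}\asymp\delta^{|S|/2}\log^{(r-1)|S|/2}(1/\delta)$ the powers of $\delta$ in $\delta^{-m+(\frac{m}{3}+\frac12+m\eta)|S|}$ and $\delta^{m((\frac13+\eta)|S|-1)}\sigma_f(\delta;N)^{|S|}$ match identically, with the logarithmic factor only providing extra slack.
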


With the lemma in hand, we can compute higher moments.

\begin{proposition} \label{prop:hm}
Given $X, \delta>0$ and $ k\geq 3,$ let $2 \leq N \leq
X^{\frac{m(1-\theta)}{m^{k}-1}}$ for some fixed small $\theta>0.$ Suppose $\delta=o(1)$ but $\delta^{m}N \to \infty$ as $X \to \infty.$ 
Then
\begin{align*}
\mathbb{E}_{x \sim X}^W( \Delta_f(x,\delta;N)^k )=(\mu_k+o_k(1))\sigma_f(\delta;N)^k,
\end{align*}
where
\begin{align*}
\mu_{k}:=
\begin{cases}
\frac{k!}{2^{k/2}(k/2)!} & \mbox{{\normalfont if $k$ is even,} }\\
\hfil 0 & \mbox{{\normalfont otherwise }} 
\end{cases}
\end{align*}
is the $k$-th standard Gaussian moment.

\begin{proof}
Let $[N]:=\{ 1,2,\ldots,N \}.$ Then similar to Lemma \ref{lem:mean} and \ref{lem:var}, for each $\boldsymbol{n} \in  [N]^k$, we have
\begin{align*}
&\frac{1}{X} \int_{-\infty}^{\infty} \prod_{j=1}^k 
\cos \left( \pi \breve{n_j} (2x+\delta)+\varphi \right)W \left( \frac{x}{X} \right) dx \\ 
=&\frac{1}{2^k} \sum_{\boldsymbol{\epsilon} \in \{\pm 1\}^k} 
 \prod_{j=1}^k e^{i \epsilon_j (\pi \breve{n_j} \delta + \varphi) } 
 \cdot \widehat{W} \left( -\left( \sum_{j=1}^k \epsilon_j 
\breve{n_j} \right)X \right)\\
=&\frac{1}{2^k} \sum_{\substack{ \boldsymbol{\epsilon} \in \{\pm 1\}^k \\ \langle \boldsymbol{\epsilon}, \boldsymbol{\breve{n}} \rangle=0}}
\prod_{j=1}^k  e^{i \epsilon_j \varphi } 
+O_A \left( \frac{1}{2^k} \sum_{\substack{ \boldsymbol{\epsilon} \in \{\pm 1\}^k \\ \langle \boldsymbol{\epsilon}, \boldsymbol{\breve{n}} \rangle \neq 0}} \left(\left| \langle \boldsymbol{\epsilon}, \boldsymbol{\breve{n}} \rangle  \right|X \right)^{-A} \right),
\end{align*}
where $ \langle \boldsymbol{\epsilon}, \boldsymbol{\breve{n}} \rangle:=\sum_{j=1}^k \epsilon_j \breve{n_j}.$ Since by Lemma \ref{lem:diss}
\begin{align*}
\left| \langle \boldsymbol{\epsilon}, \boldsymbol{\breve{n}} \rangle  \right| 
\geq \frac{m/D^{1/m}}{\left(kN^{1/m}\right)^{m^{k}-1}}
\end{align*}
whenever $\langle \boldsymbol{\epsilon}, \boldsymbol{\breve{n}} \rangle \neq 0,$ the error term here is
\begin{align*}
\ll_{A} \left( \frac{X}{\left( kN^{1/m} \right)^{m^{k}-1}} \right)^{-A} 
&\ll_{A,k} 
N^{A (m^{k}-1)/m} X^{-A} \\
&\ll_{A,k} X^{-\theta A}.
\end{align*}
Therefore, we have
\begin{align*}
 \left(\frac{2w}{\pi}\right)^k \underset{\langle \boldsymbol{\epsilon}, \boldsymbol{\breve{n}} \rangle \neq 0}{\sum_{\boldsymbol{\epsilon} \in \{\pm 1\}^k}\sum_{\boldsymbol{n}\in [N]^k}}
 \prod_{j=1}^k \frac{ \lambda_f(n_j)}{\sqrt{n_j}} \cdot 
 \frac{\sin \left( \pi \breve{n_j}\delta \right)}{\sqrt{\breve{n_j}}} \cdot 
 O_{A,k} \left(X^{-\theta A}  \right) 
\ll_{A,k}& \, \delta^k N^k X^{-\theta A}\\
\ll_{B,k}& \, \delta^k  X^{-B},
\end{align*}
so that
\begin{align*}
 \mathbb{E}_{x \sim X}^W( \Delta_f(x,\delta;N)^k )=& \left(\frac{w}{\pi}\right)^k \underset{\langle \boldsymbol{\epsilon}, \boldsymbol{\breve{n}} \rangle=0}{\sum_{\boldsymbol{\epsilon} \in \{\pm 1\}^k}\sum_{\boldsymbol{n}\in [N]^k}}
 \prod_{j=1}^k \frac{ \lambda_f(n_j)}{\sqrt{n_j}} \cdot 
 \frac{\sin \left( \pi \breve{n_j}\delta \right)}{\sqrt{\breve{n_j}}} \cdot e^{i\epsilon_j \varphi} 
+O_{B,k} (  \delta^k X^{-B} ).
\end{align*}

It remains to address the diagonal sum. Appealing to Lemma \ref{lem:li}, if $\langle \boldsymbol{\epsilon}, \boldsymbol{\breve{n}} \rangle=0,$ then there must exist a decomposition 
\begin{align*}
[k]:=\{1,2, \ldots,k \}= \bigsqcup_{i=1}^l S_i
\end{align*}
such that for $i=1,\ldots,l$ and $j \in S_i,$ we have $n_j=q_i r_j^m$ with $q_i$'s being distinct $m$-th power-free and with $r_j$'s satisfying
\begin{align*}
\langle \boldsymbol{\epsilon}, \boldsymbol{r}\rangle|_{S_i}:=\sum_{j \in S_i} \epsilon_j r_j=0.
\end{align*}
Therefore, the diagonal sum is
\begin{align*}
\left(\frac{w}{\pi}\right)^k \underset{\langle \boldsymbol{\epsilon}, \boldsymbol{\breve{n}} \rangle=0}{\sum_{\boldsymbol{\epsilon} \in \{\pm 1\}^k}\sum_{\boldsymbol{n}\in [N]^k}}
 \prod_{j=1}^k \frac{ \lambda_f(n_j)}{\sqrt{n_j}} \cdot 
 \frac{\sin \left( \pi \breve{n_j}\delta \right)}{\sqrt{\breve{n_j}}} \cdot e^{i\epsilon_j \varphi} \\
=\sum_{l=1}^k \sum_{[k]=\bigsqcup_{i=1}^l S_i}
\sum_{\substack{\boldsymbol{q} \in [N]^l\\ \text{$q_i$ distinct $m$-th power-free}}}
\prod_{i=1}^l  D_{q_i}(S_i),
\end{align*}
which is
\begin{gather*}
\sum_{\substack{[k]=\bigsqcup_{i=1}^{k/2} S_i\\|S_i|=2}}
\sum_{\substack{\boldsymbol{q} \in [N]^l\\ \text{$q_i$ distinct $m$-th power-free}}}
\prod_{i=1}^l  D_{q_i}(S_i)+
\sum_{l=1}^k \sum_{\substack{[k]=\bigsqcup_{i=1}^{l} S_i\\ \exists \, i \,:\, |S_i| \neq 2}}
\sum_{\substack{\boldsymbol{q} \in [N]^l\\ \text{$q_i$ distinct $m$-th power-free}}}
\prod_{i=1}^l  D_{q_i}(S_i) \\
=:\Sigma_1+\Sigma_2.
\end{gather*}

Let us deal with the second sum $\Sigma_2$ first. Since $D_q(S)=0$ if $|S|=1$, we can assume that $|S_i|\geq 2$ for all $i=1,\ldots,l.$ Therefore, Lemma \ref{lem:sumdqs} implies that 
\begin{align} \label{eq:2ndsumhighmoments}
\Sigma_2 \ll_{k} \sigma(\delta;N)^k \delta^{3m\eta.}
\end{align}

On the other hand, note that the first sum $\Sigma_1$ vanishes if $k$ is odd. Otherwise, if $k$ is even, then we have
\begin{align*}
\Sigma_1&=\sum_{\substack{[k]=\bigsqcup_{i=1}^{k/2} S_i\\|S_i|=2}}
\prod_{i=1}^{k/2} \sum_{\substack{q \leq N\\ \text{\normalfont $m$-th power-free}}}D_q(S_i)
-
\sum_{\substack{[k]=\bigsqcup_{i=1}^{k/2} S_i\\|S_i|=2}}
\sum_{\substack{\boldsymbol{q} \in [N]^{k/2}\\ \text{$q_i$ non-distinct $m$-th power-free}}}
\prod_{i=1}^{k/2}  D_{q_i}(S_i),
\end{align*}
where the first sum here is simply $\mu_k \cdot \sigma_f(\delta;N)^k$ by Lemma \ref{lem:sumdqs}.

On the other hand, since
\begin{align*}
D_q(S) \ll_{k} \frac{E_q(S)}{q^{\left(\frac{1}{3}+\frac{1}{2m}+\eta\right)|S|}},
\end{align*}
where $E_q(S)$ is defined in (\ref{eq:eqs}), it follows from the trivial case of (\ref{eq:eqsbound}) that $D_q(S) \ll_{k} 
\delta^{2m/3}.$ Therefore, the second sum is 
$\ll_{k} \sigma(\delta;N)^{k-2}\delta^{2m/3}$ again by Lemma \ref{lem:sumdqs}. Then, combining with (\ref{eq:2ndsumhighmoments}), the proposition follows from Lemma \ref{lem:varianceN}.
\end{proof}

\end{proposition}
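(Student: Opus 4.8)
The plan is to compute the $k$-th moment by the method of moments in its most hands-on form: expand $\Delta_f(x,\delta;N)^k$, integrate each resulting term against $W(\cdot/X)$ exactly as in the proofs of Lemmas \ref{lem:mean} and \ref{lem:var}, and then split the sum over $\boldsymbol{n}\in[N]^k$ into an off-diagonal part, which is negligible, and a diagonal part, whose combinatorics produces precisely the Gaussian moment $\mu_k\sigma_f(\delta;N)^k$. Concretely, I would first write $\Delta_f(x,\delta;N)^k$ as $(2w/\pi)^k\sum_{\boldsymbol{n}\in[N]^k}\bigl(\prod_{j=1}^k \tfrac{\lambda_f(n_j)}{\sqrt{n_j}}\cdot\tfrac{\sin(\pi\breve{n_j}\delta)}{\sqrt{\breve{n_j}}}\bigr)\prod_{j=1}^k\cos(\pi\breve{n_j}(2x+\delta)+\varphi)$, apply the product-to-sum identity to turn $\prod_j\cos(\cdots)$ into $2^{-k}\sum_{\boldsymbol{\epsilon}\in\{\pm1\}^k}e^{i\pi(2x+\delta)\langle\boldsymbol{\epsilon},\boldsymbol{\breve{n}}\rangle+i\varphi\sum_j\epsilon_j}$, and integrate: $\tfrac1X\int\prod_j\cos(\cdots)W(x/X)\,dx = 2^{-k}\sum_{\boldsymbol{\epsilon}}e^{i\pi\delta\langle\boldsymbol{\epsilon},\boldsymbol{\breve{n}}\rangle+i\varphi\sum_j\epsilon_j}\widehat{W}(-\langle\boldsymbol{\epsilon},\boldsymbol{\breve{n}}\rangle X)$, where $\langle\boldsymbol{\epsilon},\boldsymbol{\breve{n}}\rangle:=\sum_j\epsilon_j\breve{n_j}$.

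\textbf{Off-diagonal.} For the terms with $\langle\boldsymbol{\epsilon},\boldsymbol{\breve{n}}\rangle\neq0$, I would invoke Lemma \ref{lem:diss}, which yields $|\langle\boldsymbol{\epsilon},\boldsymbol{\breve{n}}\rangle|\gg_k (N^{1/m})^{-(m^k-1)}$; combined with the rapid decay of $\widehat{W}$ and the hypothesis $N\leq X^{m(1-\theta)/(m^k-1)}$, each such term is $O_{A,k}(X^{-\theta A})$. Summing the trivial bound $\ll\delta$ for each factor $\tfrac{\lambda_f(n_j)}{\sqrt{n_j}}\cdot\tfrac{\sin(\pi\breve{n_j}\delta)}{\sqrt{\breve{n_j}}}$ over $\boldsymbol{n}\in[N]^k$ then shows the whole off-diagonal contribution is $O_{B,k}(\delta^k X^{-B})$, which is $o_k(\sigma_f(\delta;N)^k)$ since $\sigma_f(\delta;N)^2\gg\delta^2$.

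\textbf{Diagonal.} For the diagonal $\langle\boldsymbol{\epsilon},\boldsymbol{\breve{n}}\rangle=0$, Lemma \ref{lem:li} forces a decomposition $[k]=\bigsqcup_{i=1}^l S_i$ with $n_j=q_i r_j^m$ on $S_i$, the $q_i$ distinct $m$-th power-free, and $\sum_{j\in S_i}\epsilon_j r_j=0$; collecting terms rewrites the diagonal sum as $\sum_{l=1}^k\sum_{[k]=\bigsqcup S_i}\sum_{\boldsymbol{q}\text{ distinct}}\prod_{i=1}^l D_{q_i}(S_i)$. I would split this into $\Sigma_1$ (all blocks of size $2$) and $\Sigma_2$ (the rest). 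Since $D_q(S)=0$ when $|S|=1$, every block in $\Sigma_2$ has size $\geq2$ and at least one has size $\geq3$; Lemma \ref{lem:sumdqs} bounds the size-$2$ blocks by $\sigma_f(\delta;N)^2$ and the larger one by $O(\delta^{m((1/3+\eta)|S|-1)})\sigma_f(\delta;N)^{|S|}$, giving $\Sigma_2\ll_k\sigma_f(\delta;N)^k\delta^{3m\eta}=o_k(\sigma_f(\delta;N)^k)$. For $\Sigma_1$, which vanishes when $k$ is odd, I would remove the distinctness constraint by inclusion–exclusion: the main term $\sum_{\text{perfect matchings of }[k]}\prod_i\sum_q D_q(S_i)$ equals $\mu_k\sigma_f(\delta;N)^k$ since $\sum_q D_q(S)=\sigma_f(\delta;N)^2$ for $|S|=2$ by Lemma \ref{lem:sumdqs} and there are $\mu_k=k!/(2^{k/2}(k/2)!)$ perfect matchings; the correction terms, in which some $q$ is repeated, are handled by the trivial pointwise bound $D_q(S)\ll_k\delta^{2m/3}$ from (\ref{eq:eqsbound}) together with Lemma \ref{lem:sumdqs} on the remaining blocks, giving $\ll_k\sigma_f(\delta;N)^{k-2}\delta^{2m/3}=o_k(\sigma_f(\delta;N)^k)$. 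Assembling these estimates, and using Lemma \ref{lem:varianceN} to know $\sigma_f(\delta;N)^2\asymp\delta\log^{r-1}(1/\delta)$ has the expected size, yields the claimed asymptotic.

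\textbf{Main obstacle.} With Lemmas \ref{lem:diss}, \ref{lem:li}, \ref{lem:sumdqs}, \ref{lem:varianceN} and the bound (\ref{eq:eqsbound}) already in place, the essential difficulty is bookkeeping rather than analysis: one must correctly enumerate the block decompositions dictated by Lemma \ref{lem:li}, and within $\Sigma_1$ cleanly isolate the perfect-matching contribution from the repeated-$q$ remainder so that only the former survives in the limit. One must also track carefully that the admissible range $N\leq X^{m(1-\theta)/(m^k-1)}$ is exactly what absorbs the $(kN^{1/m})^{m^k-1}$ loss from Lemma \ref{lem:diss} when discarding the off-diagonal terms.
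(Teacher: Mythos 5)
Your proposal follows essentially the same path as the paper's proof: expand the $k$-th power, use the product-to-sum identity and rapid decay of $\widehat{W}$ to discard the off-diagonal via Lemma~\ref{lem:diss} (absorbed exactly by the hypothesis $N\leq X^{m(1-\theta)/(m^k-1)}$), decompose the diagonal into blocks via Lemma~\ref{lem:li}, split into all-pairs ($\Sigma_1$) versus the rest ($\Sigma_2$), bound $\Sigma_2$ with Lemma~\ref{lem:sumdqs}, and extract $\mu_k\sigma_f(\delta;N)^k$ from $\Sigma_1$ by removing the distinctness constraint and controlling the repeated-$q$ correction with the pointwise bound $D_q(S)\ll_k\delta^{2m/3}$. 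The argument and the lemmas invoked coincide with the paper's; the only implicit point worth making explicit (which the paper also leaves implicit) is that $D_q(S)\geq 0$ when $|S|=2$, so the equality $\sum_q D_q(S)=\sigma_f(\delta;N)^2$ from Lemma~\ref{lem:sumdqs}, which is stated there with absolute values, holds without them.
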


\section{$L^2$-deviation of the approximants}

For degrees $m \geq 3,$ the pointwise error term provided in Proposition \ref{prop:fi} is insufficient to prove Theorem \ref{thm:real}. Inspired by the work of Lester \cite{MR3556248} and J\"{a}\"{a}saari \cite{MR4206430}, 
we establish bounds for the $L^2$-deviation of the approximants $\Delta_f(x,\delta;N)$ and $(x+\delta)^{-\frac{m-1}{2}}\Delta_f((x+\delta)^m),$ except that in order to prove Theorem \ref{thm:real}, our truncation is at some suitable $N=X^{o(1)}$ instead of a fixed power of $X$ as $X \to \infty.$

\begin{lemma} \label{prop:GLH}
Assume GLH. Let $X, \delta>0$ be real numbers satisfying the conditions in Theorem \ref{thm:real}. Then there exists $N \geq 2$ for which $\delta^m N \to \infty$ as $X\to \infty$ such that
\begin{align*} 
\mathbb{E}^W_{x \sim X} \left( \left( \Delta_f(x,\delta)-\Delta_f(x,\delta;N) \right)^2 \right) = o(\sigma_f(\delta)^2).
\end{align*}
For $m=2,$ the assumption of GLH can be dropped.
\end{lemma}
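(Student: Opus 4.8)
The plan is to expand $\Delta_f(x,\delta)$ via the Vorono\u{\i} summation formula (Proposition \ref{prop:fi}), splitting the Dirichlet coefficients at a truncation level $N$ that we will choose of the form $N = X^{\psi(X)}$ with $\psi(X) \to 0$ but $\psi(X)\log X \to \infty$, so that simultaneously $N = X^{o(1)}$ and $\delta^m N \to \infty$ (the latter is possible precisely because $\log(1/\delta) = o(\log X)$). Applying Proposition \ref{prop:fi} to both $\Delta_f((x+\delta)^m)$ and $\Delta_f(x^m)$ with truncation at $N$, the difference $\Delta_f(x,\delta) - \Delta_f(x,\delta;N)$ decomposes into (i) the pointwise Vorono\u{\i} error terms $O((N/D)^{-1/m}x^{m-1+\epsilon})$, which after dividing by $x^{(m-1)/2}$ and integrating against $W(x/X)$ contribute $O(N^{-2/m}X^{2\epsilon})$ to the mean square — negligible against $\sigma_f(\delta)^2 \asymp \delta \log^{r-1}(1/\delta)$ once $N \to \infty$ faster than any fixed power of $\delta^{-1}$; (ii) a "tail" sum over $n > N$ coming from the smooth pieces, plus the genuine difference between the sharp cutoff at $N$ in the two formulas; and (iii) the arithmetic-factor mismatch caused by the fact that the argument of the sine in Proposition \ref{prop:fi} is $2\pi\breve nx + \varphi$ rather than the telescoped $\pi\breve n(2x+\delta)$ of (\ref{eq:main}) — this last discrepancy is the approximation $\Delta_f(x,\delta)\approx(\ref{eq:main})$ that holds up to lower order once $m(N/D)^{1/m}\delta\to\infty$.

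The heart of the argument is estimating the mean square of the tail $\frac{2w}{\pi}\sum_{n>N}\frac{\lambda_f(n)}{\sqrt n}\cdot\frac{\sin(\pi\breve n\delta)}{\sqrt{\breve n}}\cos(\pi\breve n(2x+\delta)+\varphi)$, and it is here that GLH enters. Opening the square and integrating against $W(x/X)$ produces diagonal terms $n_1 = n_2$ — which reproduce $\sum_{n>N}\frac{\lambda_f(n)^2}{n}\cdot\frac{\sin^2(\pi\breve n\delta)}{\breve n}$, bounded by $O(N^{-1/m}\log^{r-1}N)$ via (\ref{eq:rankinselberg}) as in Lemma \ref{lem:varianceN}, and this is $o(\sigma_f(\delta)^2)$ since $\delta^m N\to\infty$ forces $N^{-1/m} = o(\delta)$ up to logs — and off-diagonal terms weighted by $\widehat W((\breve{n_1}\pm\breve{n_2})X)$. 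For the off-diagonal terms one cannot use the spacing bound $\breve{n_1}-\breve{n_2}\gg N^{1/m-1}$ from Lemma \ref{lem:var} directly, because $N$ is no longer a fixed power of $X$; instead, following Lester and J\"a\"asaari, I would return to the second-moment integral of $\Delta_f((x+\delta)^m) - $ (its $N$-truncation) \emph{before} truncating, express the mean square as an integral of $|L(f,1/2+it)|^2$-type quantities over the relevant range of $t$ (via Parseval / the approximate functional equation, using that the Vorono\u{\i} sum is a Dirichlet polynomial whose second moment is controlled by the $L$-function on the critical line), and invoke GLH to bound $|L(f,1/2+it)| \ll (D(1+|t|)^m)^\epsilon$. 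This converts the off-diagonal contribution into something of size $\ll N^{?}X^{\epsilon}\delta^{2}$ with a negative power of $N$, hence $o(\sigma_f(\delta)^2)$.

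The main obstacle I anticipate is exactly step (iii) combined with the off-diagonal estimate: making rigorous the passage from the raw Vorono\u{\i} expansion to the telescoped form (\ref{eq:main}) while keeping all error terms below $\sigma_f(\delta)^2$ requires carefully tracking how the sharp cutoffs in the two applications of Proposition \ref{prop:fi} interact, and the GLH-based bound must be uniform in the (slowly growing) parameter $N$ and in $\delta$. For $m = 2$ one can bypass GLH because the classical fourth-moment / large-sieve bounds for $L$-functions of degree $2$ (or, in the $\tau_2$ and $r_K$ cases, the classical results of Walfisz, Titchmarsh, and the divisor/circle problem literature) already give the required second-moment estimate unconditionally; I would isolate this case and cite the appropriate sharp bound rather than GLH. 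Once the lemma is established, Theorem \ref{thm:real} follows by combining it with Proposition \ref{prop:hm} (method of moments), Lemma \ref{lem:varianceN}, and Lemma \ref{lem:selbergrankin}, together with a standard smoothing argument replacing $W(x/X)$ by the sharp indicator of $[X,2X]$.
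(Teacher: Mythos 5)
Your proposal has a fatal arithmetical error at step (i). The pointwise Vorono\u{\i} error term in Proposition~\ref{prop:fi} is $O\bigl((N/D)^{-1/m}x^{m-1+\epsilon}\bigr)$. After dividing by $x^{(m-1)/2}$ this becomes $O\bigl((N/D)^{-1/m}x^{(m-1)/2+\epsilon}\bigr)$, so its contribution to the mean square over $x\asymp X$ is $O\bigl(N^{-2/m}X^{m-1+2\epsilon}\bigr)$, not $O\bigl(N^{-2/m}X^{2\epsilon}\bigr)$ as you write. Since you insist on $N=X^{o(1)}$, this contribution is of order $X^{m-1+o(1)}$, which for $m\geq 3$ is enormous compared to $\sigma_f(\delta)^2\asymp\delta\log^{r-1}(1/\delta)=X^{o(1)}$. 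You have lost a factor of $X^{m-1}$, and no choice of $N\leq X^{o(1)}$ can save this. The paper is explicit about this obstruction: it states right before Proposition~\ref{prop:fi} that ``we do not apply the formula directly as their error term is not negligible for $m\geq 3$.'' In other words, for $m\geq 3$ one cannot simply invoke Proposition~\ref{prop:fi} at level $N$ and proceed as you do.

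The paper's actual route is structurally different. After the same reduction via Cauchy--Schwarz to bounding $\frac{1}{X}\int\bigl(\frac{\Delta_f(x^m)-\Delta_f(x^m;N)}{x^{(m-1)/2}}\bigr)^2W(x/X)\,dx$, it abandons the explicit Vorono\u{\i} formula and goes back to Perron's formula: it writes $\Delta_f(x^m)$ as a truncated and contour-shifted integral $\frac{1}{2\pi i}\int L(f,s)x^{ms}\frac{ds}{s}$, then (a) uses a Friedlander--Iwaniec / Lester saddle-point estimate to identify the contribution of the low vertical segment near $\Re(s)=-\epsilon$, up to heights $T_0\asymp(N/D)^{1/m}x$, with $\Delta_f(x^m;N)$ plus controllable errors; (b) estimates the horizontal segments using GLH and convexity; and (c) reduces the main remaining piece to the critical-line integral $\int_{1/2+iT_0}^{1/2+iT}L(f,s)x^{ms}\frac{ds}{s}$, whose $L^2$-average over $x$ is bounded by opening the square, using rapid decay of $\widehat W$ to separate near-diagonal and far-off-diagonal $(u,v)$, and finally applying GLH pointwise. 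Your Parseval/approximate-functional-equation idea is a reasonable guess at the spirit of step (c), but without the saddle-point step (a) you have no way to relate the full $\Delta_f(x^m)$ to the truncated $\Delta_f(x^m;N)$ without incurring the fatal Vorono\u{\i} error. Finally, your $m=2$ treatment is more complicated than necessary: the paper simply applies Proposition~\ref{prop:fi} with the natural truncation $N=x^m$, where the error term $O_\epsilon(x^{-1/2+\epsilon})$ \emph{after} normalization is already negligible, and then bounds the tail $\sum_{N<n\leq x^m}$ directly via the Rankin--Selberg estimate~(\ref{eq:rankinselberg}) as in Lemma~\ref{lem:varianceN}; no fourth-moment or large-sieve input is required.
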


\begin{proof}
Let us denote
\begin{align*}
\Delta_f(x^m;N) :=  \frac{w}{\pi}\cdot x^{\frac{m-1}{2}}\sum_{n \leq N} 
\frac{\lambda_f(n)}{\sqrt{n}} \cdot \frac{\sin\left( 2\pi \breve{n}x+\varphi\right)}{\sqrt{\breve{n}}}.
\end{align*}
Then by definition, we have 
\begin{gather*}
\left( \Delta_f(x,\delta)-\Delta_f(x,\delta;N) \right)^2 \\
=\left( \left( \frac{\Delta_f((x+\delta)^m)}{(x+\delta)^{\frac{m-1}{2}}}
-\frac{\Delta_f(x^m)}{x^{\frac{m-1}{2}}}  \right)-\left( \frac{\Delta_f((x+\delta)^m;N)}{(x+\delta)^{\frac{m-1}{2}}}
-\frac{\Delta_f(x^m;N)}{x^{\frac{m-1}{2}}} \right) \right)^2 \\
\leq 2 \left( \frac{\Delta_f((x+\delta)^m)-\Delta_f((x+\delta)^m;N)}{(x+\delta)^{\frac{m-1}{2}}} \right)^2 
+  2 \left( \frac{\Delta_f(x^m)-\Delta_f(x^m;N)}{x^{\frac{m-1}{2}}} \right)^2.
\end{gather*}
Therefore, it suffices to bound
\begin{align} \label{eq:devN}
\frac{1}{X} \int_{-\infty}^{\infty} \left(  \frac{\Delta_f(x^m)-\Delta_f(x^m;N)}{x^{\frac{m-1}{2}}} \right)^2 W\left( \frac{x}{X} \right) dx.
\end{align}

For $m=2,$ Proposition \ref{prop:fi} 
yields
\begin{align*}
 \frac{\Delta_f(x^m)-\Delta_f(x^m;N)}{x^{\frac{m-1}{2}}} 
=\frac{w}{\pi}\sum_{N<n \leq x^m} \frac{\lambda_f(n)}{\sqrt{n}} \cdot \frac{\sin\left( 2\pi \breve{n}x+\varphi\right)}{\sqrt{\breve{n}}}
+O_{\epsilon}(x^{-\frac{1}{2}+\epsilon}).
\end{align*}
Arguing similarly as in Lemma \ref{lem:varianceN}, one can show that (\ref{eq:devN}) is
\begin{align*}
\ll_{\epsilon} \sum_{N<n \leq (5X/2)^m} \frac{\lambda_f(n)^2}{n}
+X^{-1+2\epsilon}.
\end{align*}
Assuming (\ref{eq:rankinselberg}), the sum here is
\begin{align*}
\ll \frac{\log^{r-1} N}{N^{1/m}} =o(\sigma_f(\delta)^2).
\end{align*}
Also, since $\log \frac{1}{\delta}=o(\log X)$ as $X \to \infty,$ we have $X^{-1+2\epsilon}=o(\sigma_f(\delta)^2)$ as well.

It remains to bound (\ref{eq:devN}) for $m \geq 3$ under GLH. Similar to \cite[Proposition 2.2]{MR3556248} and \cite[Proposition 9]{MR4206430}, by truncating and shifting the contour of the integral
\begin{align*}
\int_{1+\epsilon-i\infty}^{1+\epsilon+i\infty} L(f,s)\frac{x^{ms}}{s} ds
\end{align*}
appropriately, one can show that 
\begin{gather} 
\Delta_f(x^m)=\sum_{n \leq x^m} \lambda_f(n) - \mathop{\mathrm{Res}}_{s=1} L(f,s)\frac{x^{ms}}{s} \nonumber \\
= \Re\left(\frac{1}{\pi i}\left\{  \int_{-\epsilon}^{-\epsilon+iT_0}
+ \int_{\frac{1}{2}+iT_0}^{\frac{1}{2}+iT}
+ \int_{-\epsilon+iT_0}^{\frac{1}{2}+iT_0}
+ \int_{\frac{1}{2}+iT}^{1+\epsilon+iT}
+ \int_{1+\epsilon+iT}^{1+\epsilon+i\infty}
\right\}
L(f,s)x^{ms} \frac{ds}{s}
\right)+L(f,0) \label{eq:l(f,0)}
\end{gather}
for $1 \leq T_0 < T \leq x^m$ to be determined later.
Appealing to a saddle-point estimate due to Friedlander and Iwaniec (see \cite[pp. 497--499]{MR2134400} as well as \cite[Lemma 2.4]{MR3556248}), there exists a positive increasing function $\psi(x) \ll_{\epsilon} x^{\epsilon}$ for which $\psi(x) \to \infty$ as $x \to \infty$ such that 
\begin{gather} 
\Re\left(\frac{1}{\pi i}\int_{-\epsilon}^{-\epsilon+iT_0} L(f,s)x^{ms} \frac{ds}{s}
\right) \nonumber \\
=\Delta_f(x^m;N)+O\left( x^m \psi(x)^m {T_0}^{-\frac{m+1}{2}}
+\psi(x)^m {T_0}^{\frac{m}{2}-1}
\right), \label{eq:fisaddle}
\end{gather}
where we set $T_0= 2 \pi (N/D)^{\frac{1}{m}}x.$ Using the assumption (\ref{eq:rankinselberg}), one can show that
\begin{align} \label{eq:rsperron}
\int_{1+\epsilon+iT}^{1+\epsilon+i\infty} L(f,s)x^{ms} \frac{ds}{s} 
\ll_{\epsilon} & \, x^{m \epsilon }\sum_{n=1}^{\infty} \frac{|\lambda_f(n)|}{n^{1+\epsilon}\max\{ 1, T|\log(x^m/n) |\}} \nonumber \\
\ll_{\epsilon} & \, x^{(1+2\epsilon)m} T^{-1}
\end{align}
(see \cite[p. 71]{koukoulopoulos2020distribution} for instance). Also, assuming GLH, we have
\begin{align} \label{eq:glhhor1}
\int_{\frac{1}{2}+iT}^{1+\epsilon+iT} L(f,s)x^{ms} \frac{ds}{s}
\ll_{\epsilon} & \, (DT^m)^{\epsilon} \frac{1}{T} \int_{1/2}^{1+\epsilon} x^{m\sigma} d\sigma \nonumber\\
\ll_{\epsilon} & \, x^{(1+\epsilon)m} T^{-1+m\epsilon}.
\end{align}
Applying the Phragmén--Lindelöf principle, we also have
\begin{align} \label{eq:glhhor2}
\int_{-\epsilon+iT_0}^{\frac{1}{2}+iT_0} L(f,s)x^{ms} \frac{ds}{s}
\ll_{\epsilon} & \, \frac{1}{T_0} \int_{-\epsilon}^{1/2} (DT_0^m)^{\frac{1}{2}-\sigma+\epsilon} x^{\sigma} d\sigma \nonumber \\
\ll_{\epsilon} & \, x^{-m\epsilon} {T_0}^{\frac{m}{2}-1+2m\epsilon}.
\end{align}
Then, combining (\ref{eq:l(f,0)}), (\ref{eq:fisaddle}), (\ref{eq:rsperron}), (\ref{eq:glhhor1}) and (\ref{eq:glhhor2}) with the fact that $L(f,0) \ll D^{1/2}$ (see \cite[p. 497]{MR2134400} for instance), the difference $\Delta_f(x^m) - \Delta_f(x^m;N)$ is
\begin{gather*}
\ll_{\epsilon} 
\left| \int_{\frac{1}{2}+iT_0}^{\frac{1}{2}+iT} L(f,s) \frac{x^{ms}}{s} ds \right|
+D^{1/2}+x^m \psi(x)^m {T_0}^{-\frac{m+1}{2}} +\psi(x)^m {T_0}^{\frac{m}{2}-1}
\\
+ x^{(1+2\epsilon)m} T^{-1}
+  x^{(1+\epsilon)m} T^{-1+m\epsilon} + x^{-m\epsilon} {T_0}^{\frac{m}{2}-1+2m\epsilon},
\end{gather*}
so that by the Cauchy--Schwarz inequality, the expression (\ref{eq:devN}) is 
\begin{gather} 
\ll_{\epsilon} \frac{1}{X} \int_{-\infty}^{\infty} 
\left| \int_{\frac{1}{2}+iT_0}^{\frac{1}{2}+iT} L(f,s) x^{ms} \frac{ds}{s} \right|^2
W \left( \frac{x}{X} \right) \frac{dx}{x^{m-1}}
+\frac{X^{m+1+4m\epsilon}}{T^{2-2m\epsilon}}
\nonumber \\
+\frac{X^{m+1}\psi(X)^{2m}}{T_0^{m+1}}
+\frac{\psi(X)^{2m}T_0^{m-2+4m\epsilon}}{X^{m-1}}
\nonumber \\
\ll_{\epsilon} \frac{1}{X} \int_{-\infty}^{\infty} 
\left| \int_{\frac{1}{2}+iT_0}^{\frac{1}{2}+iT} L(f,s) x^{ms} \frac{ds}{s} \right|^2
W \left( \frac{x}{X} \right) \frac{dx}{x^{m-1}}
+\frac{X^{m+1+4m\epsilon}}{T^{2-2m\epsilon}} \nonumber \\
+\frac{\psi(X)^{2m}}{N^{1+\frac{1}{m}}}
+\frac{\psi(X)^{2m} N^{1-\frac{2}{m}+4\epsilon}}{X^{1-4m\epsilon}}. \label{eq:psi}
\end{gather}

It remains to bound the first integral, which is
\begin{gather} \label{eq:finale}
\iint_{(u,v) \in [T_0,T]^2} \frac{L(f,\frac{1}{2}+iu)L(f,\frac{1}{2}-iv)}{(\frac{1}{2}+iu)(\frac{1}{2}-iv)} \left( \frac{1}{X} \int_{-\infty}^{\infty} x^{1+im(u-v)} W\left( \frac{x}{X} \right) dx \right) du dv.
\end{gather}
Since $\widehat{W}$ decays rapidly, by making the change of variables $y=\frac{x}{X},$ the inner integral is
\begin{align*}
X^{1+im(u-v)} \int_{-\infty}^{\infty} y^{1+im(u-v)} W(y) dy
\leq g(A) X \min \{ 1, |u-v|^{-A} \}
\end{align*}
for some positive increasing function $g(A).$
Let $\theta(X)$ be a positive (slowly) decreasing function for which $\theta(X)=o_{X \to \infty}(1)$ and $g(\theta(X)^{-1}) \leq \log X$. Then the expression (\ref{eq:finale}) is
\begin{gather*} 
\leq  X  g(A) \iint_{\substack{(u,v) \in [T_0,T]^2\\|u-v| \leq X^{2\theta(X)}}} 
\left|  \frac{L(f,\frac{1}{2}+iu)L(f,\frac{1}{2}-iv)}{(\frac{1}{2}+iu)(\frac{1}{2}-iv)} \right|dudv  \nonumber \\
+ X^{1-2A\theta(X)}g(A) \iint_{\substack{(u,v) \in [T_0,T]^2\\|u-v| >X^{2\theta(X)} }} 
\left|  \frac{L(f,\frac{1}{2}+iu)L(f,\frac{1}{2}-iv)}{(\frac{1}{2}+iu)(\frac{1}{2}-iv)} \right| dudv. \label{eq:dichotomy}
\end{gather*}
Assuming GLH,  there exists a positive increasing function $\phi(x) \ll_{\epsilon} x^{\epsilon}$ satisfying $\phi(x) \to \infty$ as $x \to \infty$ for which the integrands here are $\leq (\phi(u)\phi(v))^m(uv)^{-1}.$ Then this is
\begin{align*}
\ll & X^{1+2\theta(X)}g(A) \phi(2T_0)^{2m}T_0^{-1}+X^{1-2A\theta(X)}g(A)
\phi(T)^{2m}\log^2 T \\
= & X^{2\theta(X)} g(A) \phi(4\pi (N/D)^{1/m}X)^{2m}N^{-\frac{1}{m}}
+ X^{1-2A\theta(X)}g(A)
\phi(T)^{2m}\log^2 T.
\end{align*}
Finally, combining with (\ref{eq:psi}), the lemma follows from taking $T=(X/2)^m, A=\theta(X)^{-1}, N=\delta^{-m}
g(\theta(X)^{-1} )^m X^{2m\theta(X)} \psi(X)^{2m}\phi(X^2)^{2m^2}$ and letting $\epsilon \to 0^+.$
\end{proof}

\begin{lemma} \label{lem:grh}
Let $X, \delta>0$ be real numbers satisfying the conditions in Theorem \ref{thm:real}. Then as $X \to \infty,$ we have
\begin{align*}
\mathbb{E}^W_{x \sim X} \left( 
\left(
\frac{\Delta_f((x+\delta)^m)}{(x+\delta)^{\frac{m-1}{2}}}-
\frac{\Delta_f((x+\delta)^m)}{x^{\frac{m-1}{2}}}
\right)^2
\right)=o(\sigma_f(\delta)^2).
\end{align*}

\end{lemma}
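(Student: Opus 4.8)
The plan is to factor out the common numerator and exploit that the two normalizations differ by an entire power of $X$. Writing the integrand as
\begin{align*}
\left( \frac{\Delta_f((x+\delta)^m)}{(x+\delta)^{\frac{m-1}{2}}}-\frac{\Delta_f((x+\delta)^m)}{x^{\frac{m-1}{2}}} \right)^2
= \Delta_f((x+\delta)^m)^2 \left( \frac{1}{(x+\delta)^{\frac{m-1}{2}}}-\frac{1}{x^{\frac{m-1}{2}}} \right)^2,
\end{align*}
and noting that for $x$ in the support of $W(\cdot/X)$ (so $x \asymp X$) and $\delta=o(1)$, a first-order Taylor expansion gives
\begin{align*}
\left| \frac{1}{(x+\delta)^{\frac{m-1}{2}}}-\frac{1}{x^{\frac{m-1}{2}}} \right|
= \frac{1}{x^{\frac{m-1}{2}}}\left| \left( 1+\tfrac{\delta}{x} \right)^{-\frac{m-1}{2}}-1 \right|
\ll \frac{\delta}{X^{\frac{m+1}{2}}},
\end{align*}
it suffices to show that $\delta^2 X^{-(m+1)}\,\mathbb{E}_{x\sim X}^W\left( \Delta_f((x+\delta)^m)^2 \right)=o(\sigma_f(\delta)^2)$.

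The remaining content is the crude mean-square bound $\mathbb{E}_{x\sim X}^W\left( \Delta_f((x+\delta)^m)^2 \right) \ll X^{m-1}$, which I would extract from the work already done. Using the definition of $\Delta_f(x,\delta)$, write
\begin{align*}
\frac{\Delta_f((x+\delta)^m)}{(x+\delta)^{\frac{m-1}{2}}} = \Delta_f(x,\delta) + \frac{\Delta_f(x^m)}{x^{\frac{m-1}{2}}}.
\end{align*}
Fix the explicit $N$ exhibited in the proof of Lemma \ref{prop:GLH}, which is $X^{o(1)}$, satisfies $\delta^m N\to\infty$, and in particular obeys $N\le X^{m(1-\theta)/(m-1)}$. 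For the first summand, Minkowski's inequality together with Lemma \ref{prop:GLH} and Lemmas \ref{lem:var} and \ref{lem:varianceN} gives $\mathbb{E}_{x\sim X}^W(\Delta_f(x,\delta)^2)=(1+o(1))\sigma_f(\delta)^2=o(1)$. For the second summand, split $\Delta_f(x^m)/x^{\frac{m-1}{2}}=(\Delta_f(x^m)-\Delta_f(x^m;N))/x^{\frac{m-1}{2}}+\Delta_f(x^m;N)/x^{\frac{m-1}{2}}$; the mean square of the first term is $o(\sigma_f(\delta)^2)$ — this is exactly the quantity (\ref{eq:devN}) bounded in the course of proving Lemma \ref{prop:GLH} — and for the second term the almost-orthogonality of $x\mapsto \sin(2\pi \breve{n}x+\varphi)$ (as in the proof of Lemma \ref{lem:var}, using $\breve{n_1}-\breve{n_2}\gg N^{\frac{1}{m}-1}$ and $N=X^{o(1)}$ to kill the off-diagonal) yields
\begin{align*}
\mathbb{E}_{x\sim X}^W\left( \left( \frac{\Delta_f(x^m;N)}{x^{\frac{m-1}{2}}} \right)^2 \right)
= \frac{1}{2\pi^2}\sum_{n\leq N}\frac{\lambda_f(n)^2}{n\breve{n}}+O_A(X^{-A})
\ll \sum_{n=1}^{\infty}\frac{\lambda_f(n)^2}{n^{1+\frac{1}{m}}}\ll 1,
\end{align*}
the last series converging by partial summation from (\ref{eq:rankinselberg}). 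Hence $\mathbb{E}_{x\sim X}^W\bigl( (\Delta_f((x+\delta)^m)/(x+\delta)^{\frac{m-1}{2}})^2 \bigr)\ll 1$, and since $(x+\delta)^{m-1}\ll X^{m-1}$ throughout the support of $W(\cdot/X)$, we conclude $\mathbb{E}_{x\sim X}^W\left( \Delta_f((x+\delta)^m)^2 \right)\ll X^{m-1}$.

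Putting this together, $\delta^2 X^{-(m+1)}\,\mathbb{E}_{x\sim X}^W\left( \Delta_f((x+\delta)^m)^2 \right)\ll \delta^2/X^2 = o(\delta)$, which is $o(\sigma_f(\delta)^2)$ since $\sigma_f(\delta)^2 = c_f m^r\delta\log^{r-1}(1/\delta)\gg\delta$; this completes the proof. The argument requires no new hard input: the only point to be careful about is that the parameter $N$ furnished by Lemma \ref{prop:GLH} is genuinely $X^{o(1)}$ and meets the hypotheses of Lemmas \ref{lem:var} and \ref{lem:varianceN}, so that all the $O_A(X^{-A})$ off-diagonal and error terms above are negligible. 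Conceptually, the whole lemma rests on the fact that the normalization mismatch $|(x+\delta)^{-\frac{m-1}{2}}-x^{-\frac{m-1}{2}}|$ is a full power of $X$ smaller than the natural scale $x^{-\frac{m-1}{2}}$, so that even a trivial on-average bound $\Delta_f((x+\delta)^m)\ll x^{\frac{m-1}{2}}$ would more than suffice.
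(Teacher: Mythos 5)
Your proposal is correct and takes essentially the same route as the paper: both argue via a first-order Taylor expansion of $(x+\delta)^{-(m-1)/2}-x^{-(m-1)/2}$ producing the gain $\delta/x$, and then bound the mean square of the remaining $\Delta_f$-quantity by splitting into a deviation piece (handled by Lemma~\ref{prop:GLH}/equation~(\ref{eq:devN})) and a truncated-approximant piece (handled by the orthogonality as in Lemma~\ref{lem:var} together with the Rankin--Selberg estimate). The only cosmetic difference is that you route the mean-square bound through the decomposition $\Delta_f((x+\delta)^m)/(x+\delta)^{(m-1)/2}=\Delta_f(x,\delta)+\Delta_f(x^m)/x^{(m-1)/2}$, whereas the paper works directly with $\Delta_f(x^m)/x^{(m+1)/2}$; both yield the same crude bound, and your final comparison $\delta^2 X^{-2}=o(\sigma_f(\delta)^2)$ matches the paper's conclusion.
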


\begin{proof}
Since
\begin{align*}
\frac{\Delta_f((x+\delta)^m)}{(x+\delta)^{\frac{m-1}{2}}}
=\frac{\Delta_f((x+\delta)^m)}{x^{\frac{m-1}{2}}}
+O \left( \delta \cdot \frac{\Delta_f((x+\delta)^m)}{x^{\frac{m+1}{2}}} \right),
\end{align*}
it suffices to bound
\begin{align*}
\frac{1}{X} \int_{-\infty}^{\infty} \left( \frac{\Delta_f(x^m)}{x^{\frac{m+1}{2}}} \right)^2 W \left( \frac{x}{X} \right) dx,
\end{align*}
which is
\begin{align*}
\ll \frac{1}{X} \int_{-\infty}^{\infty} \left( \frac{\Delta_f(x^m)-\Delta_f(x^m;N)}{x^{\frac{m+1}{2}}} \right)^2 W \left( \frac{x}{X} \right) dx
+ \frac{1}{X} \int_{-\infty}^{\infty} \left( \frac{\Delta_f(x^m;N)}{x^{\frac{m+1}{2}}} \right)^2 W \left( \frac{x}{X} \right) dx
\end{align*}
with $N=\delta^{-m}
g(\theta(X)^{-1} )^m X^{2m\theta(X)} \psi(X)^{2m}\phi(X^2)^{2m^2}$ as defined in the last line of the proof of Lemma \ref{prop:GLH}. Appealing to Lemma \ref{prop:GLH} and (\ref{eq:devN}), the first integral is $o(\sigma_f(\delta)^2 X^{-2}).$ 

On the other hand, arguing similarly as in Lemma \ref{lem:varianceN}, the second integral is
\begin{align*}
\ll \frac{1}{X^2} \sum_{n \leq N} \frac{\lambda_f(n)^2}{n^{1+\frac{1}{m}}} 
\ll\frac{1}{X^2} \cdot \frac{\log^{r-1} N}{N^{1/m}},
\end{align*}
which is again $o(\sigma_f(\delta)^2 X^{-2}).$ Therefore, the proof is completed.
\end{proof}

\section{Proof of Theorem \ref{thm:real}}

We are now set to prove Theorem \ref{thm:real}. Given a Borel subset $B \subseteq \mathbb{R},$ we denote
\begin{align*}
\mathbb{P}_{x \sim X}^W \left( B \right):=\frac{1}{X}\int_B W\left( \frac{x}{X} \right) dx.
\end{align*}

Let $N=\delta^{-m}
g(\theta(X)^{-1} )^m X^{2m\theta(X)} \psi(X)^{2m}\phi(X^2)^{2m^2}$ as defined in the last line of the proof of Lemma \ref{prop:GLH}. In particular, it satisfies  $\delta^{m} N \to \infty$ but  $\log N =o \left( \log X\right)$ as $X \to \infty,$ so that Lemma \ref{lem:mean}, Lemma \ref{lem:var} and Proposition \ref{prop:hm} are all applicable. Then, using the method of moments (see \cite[Theorem 30.1]{MR1324786} for instance), 
we have
\begin{align*}
\mathbb{P}_{x \sim X}^W \left( \frac{\Delta_f(x,\delta;N)}{\sigma_f(\delta;N)} \in (\alpha, \beta] \right)
=\frac{1}{\sqrt{2\pi}}\int_{\alpha}^{\beta}e^{-\frac{t^2}{2}}dt+o_{\alpha,\beta}(1)
\end{align*}
for any real numbers $\alpha<\beta.$ Since
$\sigma_f(\delta;N)=(1+o(1))\sigma_f(\delta)$
by Lemma \ref{lem:varianceN}, it follows that
\begin{align*}
\mathbb{P}_{x \sim X}^W \left( \frac{\Delta_f(x,\delta;N)}{\sigma_f(\delta)} \in (\alpha, \beta] \right)
=\frac{1}{\sqrt{2\pi}}\int_{\alpha}^{\beta}e^{-\frac{t^2}{2}}dt+o_{\alpha,\beta}(1).
\end{align*}
Appealing to Lemma \ref{prop:GLH}, we have
\begin{align*}
\mathbb{E}^W_{x \sim X} \left( \left( \Delta_f(x,\delta)-\Delta_f(x,\delta;N) \right)^2 \right)=o(\sigma_f(\delta)^2),
\end{align*}
so that Chebyshev's inequality gives
\begin{align*}
\mathbb{P}_{x \sim X}^W \left( \frac{\Delta_f(x,\delta)}{\sigma_f(\delta)} \in (\alpha, \beta] \right) 
=\frac{1}{\sqrt{2\pi}}\int_{\alpha}^{\beta}e^{-\frac{t^2}{2}}dt+o_{\alpha,\beta}(1).
\end{align*}
Similarly, since
\begin{align*}
\mathbb{E}^W_{x \sim X} \left( 
\left(
\frac{\Delta_f((x+\delta)^m)}{(x+\delta)^{\frac{m-1}{2}}}-
\frac{\Delta_f((x+\delta)^m)}{x^{\frac{m-1}{2}}}
\right)^2
\right)=o(\sigma_f(\delta)^2)
\end{align*}
by Lemma \ref{lem:grh}, we arrive at
\begin{align*}
\mathbb{P}_{x \sim X}^W \left( 
\frac{  \Delta_f((x+\delta)^m)-\Delta_f(x^m)  }{x^{\frac{m-1}{2}}\sigma_f(\delta)}
\in (\alpha, \beta] \right)=\frac{1}{\sqrt{2\pi}}\int_{\alpha}^{\beta}e^{-\frac{t^2}{2}}dt+o_{\alpha,\beta}(1).
\end{align*}

Finally, let $0<\epsilon<1.$ We choose $W$ for which $W \equiv 1$ on $(1+\epsilon, 2-\epsilon).$ Since $W$ is nonnegative and of unit mass, we have
\begin{gather*}
\frac{1}{X}\mathop{\mathrm{meas}}\left\{ x \in [X,2X] \,: \, \frac{  \Delta_f((x+\delta)^m)-\Delta_f(x^m) }{x^{\frac{m-1}{2}}\sigma_f(\delta)} \in (\alpha,\beta] \right\} \\
- \mathbb{P}_{x \sim X}^W \left( 
\frac{  \Delta_f((x+\delta)^m)-\Delta_f(x^m)  }{x^{\frac{m-1}{2}}\sigma_f(\delta)}
\in (\alpha, \beta] \right) \\
=\frac{1}{X}\int_{-\infty}^{\infty} 1_{(\alpha,\beta]} 
\left( \frac{  \Delta_f((x+\delta)^m)-\Delta_f(x^m)  }{x^{\frac{m-1}{2}}\sigma_f(\delta)} \right)
\left( 1_{[1,2]}\left( \frac{x}{X} \right)-W\left( \frac{x}{X} \right) \right) dx \\
\leq  \int_{\mathbb{R} \setminus (1+\epsilon, 2-\epsilon)} 
\left(1_{[1,2]}(x)+W(x) \right) dx = 4\epsilon.
\end{gather*}
Letting $\epsilon \to 0^+,$ the proof is completed.


\section{Uniformity in $L$-functions}

Since it is unclear how the Rankin--Selberg type partial sum estimate (\ref{eq:rankinselberg}) depends on $L(f,s)$ in general, our main result, Theorem \ref{thm:real}, lacks uniformity in the parameters of $L$-functions. Nevertheless, in certain special cases, such as Example \ref{eq:tauk} where $\lambda_f(n)=\tau_k(n),$ we claim that by carefully tracking all dependencies on $k,$ if $\delta=o\left(\frac{1}{k} \right)$ but $\log \left(\frac{1}{k\delta} \right)=o\left( \frac{1}{k}\log X\right)$ as $X \to \infty,$ then the theorem holds uniformly in $k=k(X).$ 

\section*{Acknowledgements}
The author is grateful to Andrew Granville for his advice and encouragement. He would also like to thank Bingrong Huang, Steve Lester and Maksym Radziwiłł for many helpful discussions, as well as Cihan Sabuncu and the anonymous referee for their comments and corrections. Part of the work was supported by the Swedish Research Council under grant no. 2016-06596 while the author was in residence at Institut Mittag-Leffler in Djursholm, Sweden during the semester of Winter 2024.

\printbibliography

\end{document}